\renewcommand{\abstract}[1]{
	\begin{center}
	\parbox{13cm}{\small {\sc Abstract.} #1}
	\end{center}
	\smallskip
}
\newtheorem{theo}{Theorem}[section]
\newtheorem{prop}[theo]{Proposition}
\newtheorem{lemm}[theo]{Lemma}
\newtheorem{cor}[theo]{Corollary}
\theoremstyle{definition}
\newtheorem{exam}[theo]{Example}
\newtheorem{remark}[theo]{Remark}
\newcommand{\Z}{\mathbb{Z}}
\newcommand{\R}{\mathbb{R}}
\title[Twisted Alexander matrices of quandles]{Twisted Alexander matrices of quandles associated with a certain Alexander pair}
\author{Yuta Taniguchi}
\address{\scriptsize DEPARTMENT OF MATHEMATICS, GRADUATE SCHOOL OF SCIENCE, OSAKA UNIVERSITY, 1-1, MACHIKANEYAMA, TOYONAKA, OSAKA, 560-0043, JAPAN}
\email{yuta.taniguchi.math@gmail.com}
\begin{document}
\keywords{quandle, twisted Alexander matrix, surface knot, quandle homology group}
\subjclass[2020]{57K10, 57K12.}
% 57M25 Knots and links in $S^3$ 
% 57M27 Invariants of knots and 3-manifolds
% 57K10 Knot theory 
% 57K12 Generalized knots (virtual knots, welded knots, quandles, etc.)

\maketitle

\abstract{Ishii and Oshiro introduced the notion of an $f$-twisted Alexander matrix, which is a quandle version of a twisted Alexander matrix and defined an invariant of finitely presented quandles. In this paper, we study $f$-twisted Alexander matrices of certain quandles with the Alexander pair obtained from a quandle $2$-cocycle. We show that the 0-th elementary ideal of  $f$-twisted Alexander matrix of the knot quandle of a surface knot with the Alexander pair obtained from a quandle $2$-cocycle can be described with the Carter-Saito-Satoh's invariant. We also discuss a relationship between $f$-twisted Alexander matrices of connected quandles with the Alexander pair obtained from a quandle $2$-cocycle and quandle homology groups.}

\section{Introduction}
The Alexander polynomial \cite{Alexander1928topo} is one of the most important invariant of a knot. We can compute the Alexander polynomial from the Alexander matrix. Using Fox free derivatives \cite{Fox1953free}, the Alexander matrix is obtained from a presentation of the knot group. In 1990's, X. S. Lin \cite{Lin2001repr} defined the twisted Alexander polynomial associated with a linear representation and M. Wada \cite{Wada1994twisted} generalized this notion for finitely presented group using Fox free derivatives. 

A quandle \cite{Joyce1982quandle, Matveev1982distributive} is a set with a binary operation whose axioms correspond to the Reidemeister moves. Given an oriented any dimensional knot, we have a quandle associated with the knot, which is called the knot quandle. It is known that the knot quandle is more useful than the knot group for distinguishing knots.  In \cite{Ishii2022twisted}, A. Ishii and K. Oshiro introduced derivatives for quandles and the notion of an $f$-twisted Alexander matrix. Using an $f$-twisted Alexander matrix, we can obtain an invariant for finitely presented quandles. Futhermore, they showed that the (twisted) Alexander polynomial of a knot can be recovered from an $f$-twisted Alexander matrix by setting well (see also \cite{Ishii2022quandle}). To obtain an $f$-twisted Alexander matrix, we need to fix an Alexander pair, which is a pair of maps satisfy certain conditions. In \cite{Taniguchitwisted}, the author pointed out that an Alexander pair can be obtained from a quandle 2-cocycle, and showed that the 0-th elementary ideal of the $f$-twisted Alexander matrix of the knot quandle of a classical knot with the Alexander pair obtained from a quandle $2$-cocycle is determined by the quandle cocycle invariant \cite{Carter2003quandle}. Furthermore, using $f$-twisted Alexander matrices with a certain Alexander pair, he distinguished two classical knots which can not be distinguished by the (twisted) Alexander polynomial.

The purpose of this paper is to study $f$-twisted Alexander matrices of certain quandles with the Alexander pair obtained from a quandle $2$-cocycle. First, we focus on $f$-twisted Alexander matrices of knot quandles of surface knots with the Alexander pair obtained from a quandle 2-cocycle. We show that the 0-th elementary ideal of the $f$-twisted Alexander matrix of the knot quandle of a surface knot with the Alexander pair obtained from a quandle 2-cocycle is determined by the Carter-Saito-Satoh's invariant \cite{Carter2006ribbon} (Theorem \ref{theo:main_1}). By the definition, the Carter-Saito-Satoh's invariant of a 2-knot, which is a 2-sphere embedded in $\R^4$, is trivial. This implies that the 0-th elementary ideal of the $f$-twisted Alexander matrix of the knot quandle of a 2-knot with the Alexander pair obtained from a quandle 2-cocycle is trivial (Corollary \ref{cor:elementary_ideal_surface_knot}). Second, we discuss $f$-twisted Alexander matrices of connected quandles with the Alexander pair obtained from a quandle $2$-cocycle. We prove that the 0-th elementary ideal of the $f$-twisted Alexander matrix of a connected quandle with the Alexander pair obtained from a quandle 2-cocycle can be realized by the second quandle homology group of the quandle (Theorem \ref{theo:main_2}). Combining our results, we see that the second quandle homology group of the knot quandle of a 2-knot is trivial (Corollary \ref{cor:2-knot_homology}).

This paper is organized as follows: In section \ref{sect:qdle_presentation}, we recall the definition of a quandle and presentations of quandle. In section \ref{sect:def_twisted_Alexander}, we review the definition of an $f$-twisted Alexander matrix. In section \ref{sect:Alexander_matrix_surface_knot}, we discuss $f$-twisted Alexander matrices of knot quandles of surface knots using the Alexander pair associated with a quandle 2-cocycle. In section \ref{sect:Alexander_matrix_connected_qdle}, we study $f$-twisted Alexander matrices of connected quandles using the Alexander pair associated with a quandle 2-cocycle and determine the second quandle homology group of the knot quandle of 2-knots.

\section*{Acknowledgements}
The author would like to thank Seiichi Kamada for many invaluable advice. This work was supported by JSPS KAKENHI Grant Number 21J21482.

\section{Quandles and quandle presentations}
\label{sect:qdle_presentation}
A {\it quandle}~\cite{Joyce1982quandle, Matveev1982distributive} is a non-empty set $X$ with a binary operation $X^2\to X;(x,y)\mapsto x^y$ satisfying the following axioms:\vspace{-2mm}
\begin{itemize}
	\setlength{\itemsep}{0pt}
	\setlength{\parskip}{0pt}
\item[(Q1)] For any $x\in X$, we have $x^x=x$.
\item[(Q2)] For any $x,y\in X$, there exists unique element $z$ such that $z^y=x$.
\item[(Q3)] For any $x,y,z\in X$, we have $(x^y)^z=(x^z)^{(y^z)}$.
\end{itemize}
We denote the element $z$ appeared in the axiom (Q2) by $x^{y^{-1}}$. In this paper, $(x^y)^z$ is denoted by $x^{yz}$.

Let $X$ and $Y$ be quandles. A map $f:X\to Y$ is a {\it quandle homomorphism} if $f(x^y)=f(x)^{f(y)}$ for any $x,y\in X$. A quandle homomorphism $f:X\to Y$ is a {\it quandle isomorphism} if $f$ is a bijection. We denote by ${\rm Hom}(X,Y)$ the set of all homomorphism from $X$ to $Y$.

\begin{exam}
Let $\mathcal{K}$ be an oriented, connected, closed $n$-dimensional submanifold in $\R^{n+2}$. Let $Q(\mathcal{K})$ be the set of all homotopy classes, $x=[(D,\alpha)]$, of all pairs $(D,\alpha)$, where $D$ is a meridian disk of $\mathcal{K}$ and $\alpha$ is a path from a point in $\partial D$ and ending at a fixed base point $\ast\in \R^{n+2}\backslash\mathcal{K}$. Then, $Q(\mathcal{K})$ is a quandle with an operation defined by
\[
[(D_1,\alpha)]^{[(D_2,\beta)]}=[(D_1,\alpha\cdot\beta^{-1}\cdot\partial D_2\cdot \beta)],
\]
where $\partial D_2$ is a meridian loop starting from the initial point of $\beta$ and going along $\partial D_2$ in the positive direction. We call this quandle $Q(\mathcal{K})$ the {\it knot quandle} of $\mathcal{K}$.
\end{exam}
Next, we review the definition of presentations of quandles. Let $S$ be a non-empty set. The {\it free quandle} on $S$ denoted by $FQ(S)$ is a quandle with a map $\mu:S\to FQ(S)$ such that for any quandle $X$ and any map $f:S\to X$, there exists unique quandle homomorphism $f_{\#}:FQ(S)\to X$ such that $f=f_{\#}\circ\mu$. 

For $R\subset FQ(S)^2$, let us define the congruence $\sim_R$ on $FQ(S)$ to be the smallest congruence containing $R$.  Then, the set $\langle S\mid R\rangle:=FQ(S)/\sim_R$ has the quandle operation inherited from $FQ(S)$. We call the elements of $S$ the {\it generators} of $\langle S\mid R\rangle$ and the elements of $R$ the {\it relators} of $\langle S\mid R\rangle$. If a quandle $X$ is isomorphic to $\langle S\mid R\rangle$, we say that $X$ has a presentation $\langle S\mid R\rangle$. A presentation $\langle S\mid R\rangle$ is {\it finite} if $S$ and $R$ are finite sets. Refer to \cite{Fenn1992racks, Kamada2017surface} for details.

R. Fenn and C. Rouke showed \cite{Fenn1992racks} that $\langle S_1\mid R_1\rangle$ and $\langle S_2\mid R_2\rangle$ are isomorphic if and only if these presentations are related by a finite sequence of the following operations:
\begin{enumerate}
\setlength{\leftskip}{0.5cm}
\setlength{\itemsep}{0pt}
\setlength{\parskip}{0pt}
\item[(T1-1)] $\langle S\mid R\rangle\leftrightarrow\langle S\mid R\cup\{(x,x)\}\rangle$ $(x\in FQ(S))$.
\item[(T1-2)] $\langle S\mid R\cup\{(x,y)\}\rangle\leftrightarrow\langle S\mid R\cup\{(x,y), (y,x)\}\rangle$.
\item[(T1-3)] $\langle S\mid R\cup\{(x,y), (y,z)\}\rangle\leftrightarrow\langle S\mid R\cup\{(x,y), (y,z),(x,z)\}\rangle$.
\item[(T1-4)] $\langle S\mid R\cup\{(x,y)\}\rangle\leftrightarrow\langle S\mid R\cup\{(x,y),(x^{z^{\varepsilon}},y^{z^{\varepsilon}})\}\rangle$ $(z\in S,\varepsilon\in\{\pm 1\})$.
\item[(T1-5)] $\langle S\mid R\cup\{(x,y)\}\rangle\leftrightarrow\langle S\mid R\cup\{(x,y),(z^x,z^y)\}\rangle$ $(z\in FQ(S))$.
\item[(T2)] $\langle S\mid R\rangle\leftrightarrow\langle S\cup\{y\}\mid R\cup\{(y,w_y)\}\rangle$ $(y\notin S,w_y\in FQ(S))$.
\end{enumerate}
These operations are called {\it Tietze's moves}.
\begin{exam}
{\rm
Let $D$ be a diagram of an oriented classical knot $K$ in $\R^3$ and ${\rm Arc}(D)$ the set of arcs of $D$. For each crossing $\chi$, we denote a relator $r_\chi$ by $(x_i^{x_j},x_k)$, where $x_i,x_j,x_k$ are the arcs around $\chi$ such that the normal orientation of the over arc $x_j$ points from $x_i$ to $x_k$ (see Figure \ref{crossing_condition}). 
\begin{figure}[h]
	 \centering{\includegraphics[height=3cm]{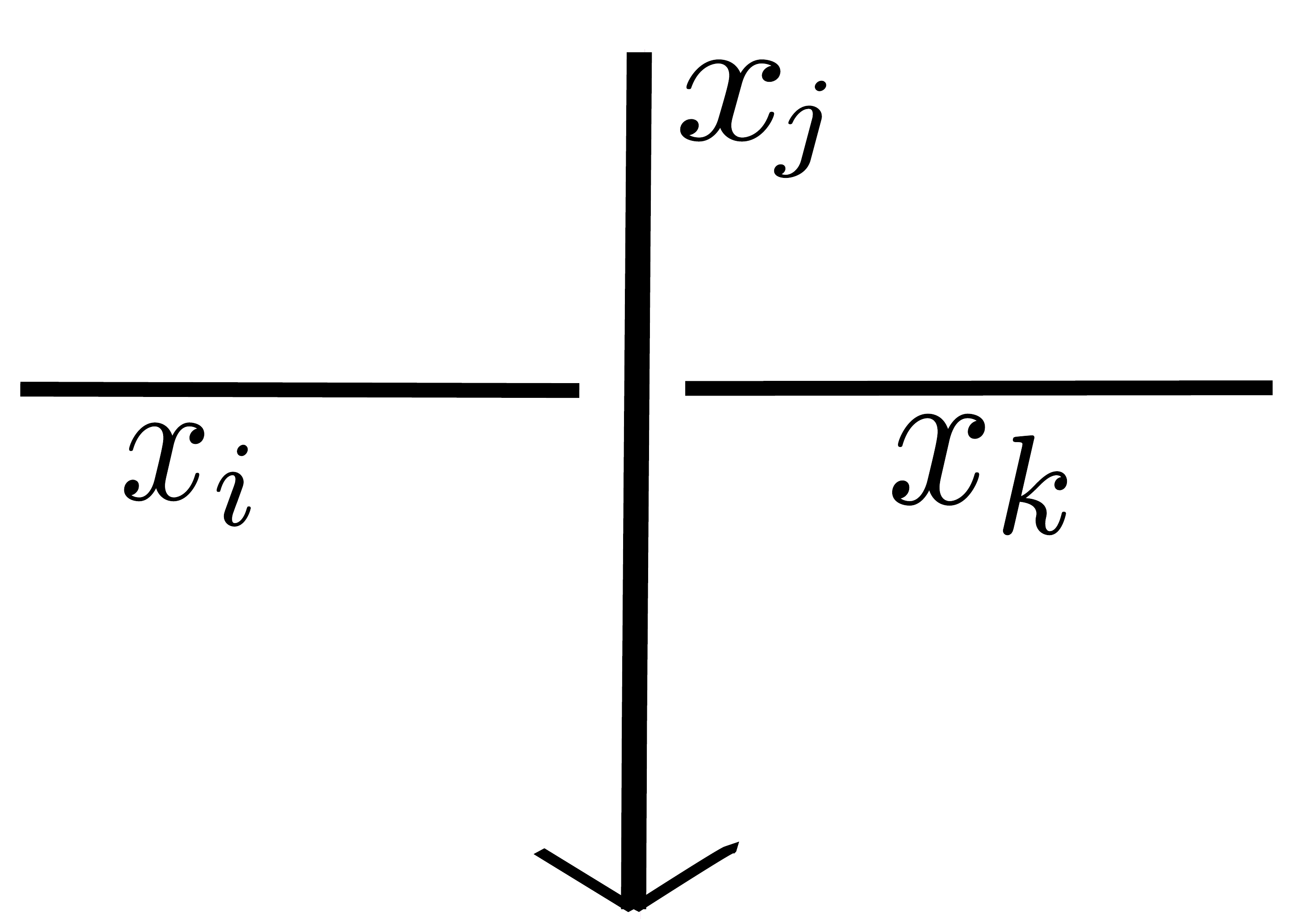}}
	\caption{The arcs around the crossing $\chi$}
	\label{crossing_condition}
 \end{figure}
Then, the knot quandle $Q(K)$ has the following presentation:
\[
\langle {\rm Arc}(D)\mid \{ r_\chi\mid \chi:\textrm{ a crossing of }D\}.
\]
}
\end{exam}

\section{$f$-twisted Alexander matrices}
\label{sect:def_twisted_Alexander}
Let $X$ be a quandle and $R$ a ring with the unity $1$. A pair $(f_1,f_2)$ of maps $f_1,f_2:X^2\to R$ is an {\it Alexander pair} \cite{Ishii2022twisted} if $f_1$ and $f_2$ satisfy the following axioms:
 \begin{itemize}
	\setlength{\itemsep}{0pt}
	\setlength{\parskip}{0pt}
\item For any $x\in X$, we have $f_1(x,x)+f_2(x,x)=1$.
\item For any $x,y\in X$, $f_1(x,y)$ is a unit of $R$.
\item For any $x,y,z\in X$, we have 
\begin{align*}
&f_1(x^y,z)f_1(x,y)=f_1(x^z,y^z)f_1(x,z),\\
&f_1(x^y,z)f_2(x,y)=f_2(x^z,y^z)f_1(y,z), {\rm and}\\
&f_2(x^y,z)=f_1(x^z,y^z)f_2(x,z)+f_2(x^z,y^z)f_2(y,z).
\end{align*}
\end{itemize}
In \cite{Andruskiewitsch2003racks},  N. Andruskiewitsch and M. Gra\~{n}a studied the extension of quandles and introduced a {\it dynamical cocycle}. An Alexander pair is a dynamical cocycle corresponding to a {\it quandle module}.

\begin{exam}
\label{exam:Alexander_pair_1}
Let $X$ be a quandle and $\Z[t^{\pm 1}]$ the ring of Laurent polynomials with integer coefficients. Let us define maps $f_1,f_2:X^2\to \Z[t^{\pm 1}]$ by
\[
f_1(x,y):=t,\quad f_2(x,y):=1-t.
\]
Then, the pair $(f_1,f_2)$ is an Alexander pair.
\end{exam}

\begin{exam}
\label{exam:Alexander_pair_2}
Let $X$ be a quandle and $A$ an abelian group. A map $\theta:X^2\to A$ is a {\it quandle $2$-cocycle} \cite{Carter2003quandle} if $\theta$ satisfies the following conditions:
\begin{itemize}
\setlength{\itemsep}{0pt}
\setlength{\parskip}{0pt}
\item For any $x\in X$, we have $\theta(x,x)=0_A$, where $0_A$ is the identity element.
\item For any $x,y,z\in X$, we have $\theta(x,y)+\theta(x^y,z)=\theta(x,z)+\theta(x^z,y^z)$. 
\end{itemize}
Let $\Z[A]$ be the group ring. For a quandle $2$-cocycle $\theta:X^2\to A$, we define the map $f_{\theta},0:X^2\to\Z[A]$ by
\[
f_{\theta}(x,y):=1\cdot\theta(x,y),\quad 0(x,y):=0.
\]
By the direct calculation, we see that $f_{\theta}=(f_{\theta},0)$ is an Alexander pair, which is called the {\it Alexander pair associated with the quandle $2$-cocycle} \cite{Taniguchitwisted}. 
\end{exam}

Next, we will review the definition of $f$-twisted Alexander matrices introduced in \cite{Ishii2022twisted}. Refer to \cite{Ishii2022twisted} for details. Let $S=\{ x_1,\ldots,x_n\}$ be a finite set and $Q$ a quandle with a finite presentation $\langle x_1,\ldots,x_n\mid r_1,\ldots,r_m\rangle$. Let ${\rm pr}_{\rm qdle}:FQ(S)\to Q$ be the canonical projection. In this paper, we often omit pr to represent ${\rm pr}_{\rm qdle}(a)$ as $a$. 

Let $R$ be a ring and $f=(f_1,f_2)$ an Alexander pair of maps $f_1,f_2:Q\times Q\to R$. For $j\in\{1,\ldots,n\}$, let us define a map $\frac{\partial_{f}}{\partial{x_j}}:FQ(S)\to R$ by the following rules:
\begin{itemize}
\item For any $x,y\in FQ(S)$, we have $\frac{\partial_{f}}{\partial{x_j}}(x^y)=f_1(x,y)\frac{\partial_{f}}{\partial{x_j}}(x)+f_2(x,y)\frac{\partial_{f}}{\partial{x_j}}(y)$.
\item For each $i\in\{1,\ldots,n\}$, we have $\frac{\partial_{f}}{\partial{x_j}}(x_i)=\begin{cases}
1\ (i=j)\\
0\ (i\neq j)
\end{cases}$.
\end{itemize}

The map $\frac{\partial_{f}}{\partial{x_j}}:FQ(S)\to R$ is called the {\it $f$-derivative with respect to $x_j$} \cite{Ishii2022twisted}. Using the first rule, we see that 
\[
\frac{\partial_{f}}{\partial{x_j}}(x^{y^{-1}})=f_1(x^{y^{-1}},y)^{-1}\frac{\partial_{f}}{\partial{x_j}}(x)-f_1(x^{y^{-1}},y)^{-1}f_2(x^{y^{-1}},y)\frac{\partial_{f}}{\partial{x_j}}(y).
\]
for any $x,y\in FQ(S)$.\\
 For a relator $r=(r_1,r_2)$, we define $\frac{\partial_{f}}{\partial{x_j}}(r):=\frac{\partial_{f}}{\partial{x_j}}(r_1)-\frac{\partial_{f}}{\partial{x_j}}(r_2)$.

\begin{prop}
\label{prop:calculation}
Let $Q$ be a quandle with a finite presentation $\langle S\mid R\rangle$, $X$ a quandle, $A$ an abelian group and $\theta:X^2\to A$ a quandle $2$-cocycle. For any element $x^{y_{1}^{\varepsilon_1}\cdots y_{n}^{\varepsilon_n}}\in FQ(S)$ and $z\in S$, we have 
\[
\frac{\partial_{f_{\theta}}}{\partial{z}}(x^{y_{1}^{\varepsilon_1}\cdots y_{n}^{\varepsilon_n}})=1\cdot\left(\sum^{n}_{i=1}\varepsilon_i\theta(x^{y_{1}^{\varepsilon_1}\cdots y_{i-1}^{\varepsilon_{i-1}}y_i^{\frac{\varepsilon_i-1}{2}}},y_i)\right)\frac{\partial_{f_{\theta}}}{\partial{z}}(x).
\]
\end{prop}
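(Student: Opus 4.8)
The plan is to prove the identity by induction on $n$, the number of action letters in the word. The key preliminary observation is that, because the Alexander pair $f_\theta=(f_\theta,0)$ has vanishing second component, the two defining rules for the $f_\theta$-derivative collapse into a single multiplicative rule: for all $a,b\in FQ(S)$ and $\varepsilon\in\{\pm 1\}$,
\[
\frac{\partial_{f_\theta}}{\partial z}\bigl(a^{b^{\varepsilon}}\bigr)=\Bigl(1\cdot\varepsilon\,\theta\bigl(a^{b^{(\varepsilon-1)/2}},b\bigr)\Bigr)\frac{\partial_{f_\theta}}{\partial z}(a).
\]
For $\varepsilon=1$ this is immediate from the first rule defining the $f$-derivative (with $f_2=0$), using $a^{b^0}=a$; for $\varepsilon=-1$ it follows from the formula for $\frac{\partial_f}{\partial x_j}(x^{y^{-1}})$ recalled above (again with $f_2=0$) together with the identity $(1\cdot c)^{-1}=1\cdot(-c)$ in the group ring $\Z[A]$.

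With this rule available, the base case $n=0$ is trivial: the word is $x$, the empty sum is $0_A$, and $1\cdot 0_A$ is the unit of $\Z[A]$. For the inductive step I would write $x^{y_1^{\varepsilon_1}\cdots y_n^{\varepsilon_n}}=v^{y_n^{\varepsilon_n}}$ with $v:=x^{y_1^{\varepsilon_1}\cdots y_{n-1}^{\varepsilon_{n-1}}}$, apply the collapsed rule with $a=v$, $b=y_n$, $\varepsilon=\varepsilon_n$, and then insert the inductive hypothesis for $\frac{\partial_{f_\theta}}{\partial z}(v)$. The only algebraic input needed to finish is that the product of two basis elements of $\Z[A]$ is again a basis element, $(1\cdot g)(1\cdot h)=1\cdot(g+h)$ for $g,h\in A$; this converts left multiplication by $1\cdot\varepsilon_n\theta(v^{y_n^{(\varepsilon_n-1)/2}},y_n)$ into appending the $i=n$ term to the sum. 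One also checks the routine identity $v^{y_n^{(\varepsilon_n-1)/2}}=x^{y_1^{\varepsilon_1}\cdots y_{n-1}^{\varepsilon_{n-1}}y_n^{(\varepsilon_n-1)/2}}$, which is exactly the argument of $\theta$ prescribed by the formula.

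I do not expect a serious obstacle; the proof is essentially bookkeeping. The one point that requires a little care — and the closest thing to a difficulty — is treating the two signs $\varepsilon_n=\pm 1$ uniformly, which is precisely the role of the exponent $(\varepsilon_i-1)/2$ in the statement, and observing that every $f_\theta$-derivative that occurs is a left scalar multiple of $\frac{\partial_{f_\theta}}{\partial z}(x)$ by a single group element of $A$, so that no honest $\Z$-linear combinations (which would not telescope) ever enter the computation. Once the $\varepsilon=-1$ case of the collapsed rule is recorded, the induction goes through directly.
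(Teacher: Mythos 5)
Your proposal is correct and follows essentially the same route as the paper's proof: establish the one-letter rules $\frac{\partial_{f_\theta}}{\partial z}(x^y)=1\cdot\theta(x,y)\,\frac{\partial_{f_\theta}}{\partial z}(x)$ and $\frac{\partial_{f_\theta}}{\partial z}(x^{y^{-1}})=1\cdot(-\theta(x^{y^{-1}},y))\,\frac{\partial_{f_\theta}}{\partial z}(x)$ from the vanishing of the second component, then peel off the last letter and iterate. Your only departures are cosmetic: you package the two signs into one formula via the exponent $(\varepsilon-1)/2$ and make explicit the induction and the group-ring identity $(1\cdot g)(1\cdot h)=1\cdot(g+h)$, which the paper leaves implicit in its ``repeating this procedure'' step.
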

\begin{proof}
By the definition of the $f$-derivative, it holds that
\[
\frac{\partial_{f_{\theta}}}{\partial z}(x^y)=f_\theta(x,y)\frac{\partial_{f_{\theta}}}{\partial z}(x)+0(x,y)\frac{\partial_{f_{\theta}}}{\partial z}(y)=1\cdot\theta(x,y)\frac{\partial_{f_{\theta}}}{\partial z}(x)
\]
and
\begin{eqnarray*}
\frac{\partial_{f_{\theta}}}{\partial z}(x^{y^{-1}})&=&f_\theta(x^{y^{-1}},y)^{-1}\frac{\partial_{f_{\theta}}}{\partial z}(x)-f_{\theta}(x^{y^{-1}},y)^{-1}0(x^{y^{-1}},y)\frac{\partial_{f_{\theta}}}{\partial z}(y)\\
&=&1\cdot(-\theta(x^{y^{-1}},y))\frac{\partial_{f_{\theta}}}{\partial z}(x)
\end{eqnarray*}
for any $x,y\in FQ(S)$. Hence, we have
\[
\frac{\partial_{f_{\theta}}}{\partial{z}}(x^{y_{1}^{\varepsilon_1}\cdots y_{n}^{\varepsilon_n}})=1\cdot(\varepsilon_n\theta(x^{y_{1}^{\varepsilon_1}\cdots y_{n-1}^{\varepsilon_{n-1}}y_{n}^{\frac{\varepsilon_n-1}{2}}},y_{n}))\frac{\partial_{f_{\theta}}}{\partial x_j}(x^{y_{1}^{\varepsilon_1}\cdots y_{n-1}^{\varepsilon_{n-1}}}).
\]
Repeating this procedure, we see that
\[
\frac{\partial_{f_{\theta}}}{\partial{z}}(x^{y_{1}^{\varepsilon_1}\cdots y_{n}^{\varepsilon_n}})=1\cdot\left(\sum^{n}_{i=1}\varepsilon_i\theta(x^{y_{1}^{\varepsilon_1}\cdots y_{i-1}^{\varepsilon_{i-1}}y_{i}^{\frac{\varepsilon_i-1}{2}}},y_{i})\right)\frac{\partial_{f_{\theta}}}{\partial z}(x).
\]
\end{proof}

Let $A$ be an $m\times n$ matrix over a commutative ring $R$. The {\it $d$-th elementary ideal} of $A$, which is denoted by $E_d(A)$, is the ideal generated by all $(n-d)$-minors of $A$ if $n-m\leq d<n$, and
\[
E_d(A)=\begin{cases}
0\quad\ \textrm{if }d<n-m,\\
R\quad\textrm{if }n\leq d.
\end{cases}
\] 

 Let $Q$ be a quandle with a finite presentation $\langle x_1,\ldots,x_n\mid r_1,\ldots,r_m\rangle$, $X$ a quandle and $\rho:Q\to X$ a quandle homomorphism. Let $R$ be a ring and $f=(f_1,f_2)$ an Alexander pair of maps $f_1,f_2:X\times X\to R$. We remark that the pair $f\circ\rho^2=(f_1\circ(\rho\times\rho),f_2\circ(\rho\times\rho))$ is also an Alexander pair (see \cite{Ishii2022twisted}). Let us define the $m\times n$ matrix $A(Q,\rho;f_1,f_2)$ by
\[
A(Q,\rho;f_1,f_2)=\left(
\begin{array}{ccc}
\frac{\partial_{f\circ\rho^2}}{\partial{x_1}}(r_1) & \cdots & \frac{\partial_{f\circ\rho^2}}{\partial{x_n}}(r_1) \\
\vdots & \ddots & \vdots \\
\frac{\partial_{f\circ\rho^2}}{\partial{x_1}}(r_m) & \cdots & \frac{\partial_{f\circ\rho^2}}{\partial{x_n}}(r_m)
\end{array}
\right).
\]
We call this matrix $A(Q,\rho;f_1,f_2)$ {\it Alexander matrix of the finite presentation $\langle x_1,\ldots,x_n\mid r_1,\ldots,r_m\rangle$ associated to the quandle homomorphism $\rho$} or the {\it $f$-twisted Alexander matrix} of $(Q,\rho)$ with respect to the quandle presentation $\langle x_1,\ldots,x_n\mid r_1,\ldots,r_m\rangle$ \cite{Ishii2022twisted}. 

%Let $\psi:R^m\to R^n$ be an $R$-module homomorphism defined by $\psi({\bm u})={\bm u}A(Q,\rho;f_1,f_2)$. We denote the Cokernel of $\psi$ by ${\rm Coker}(A(Q,\rho;f_1,f_2))$.

Let $Q^{\prime}$  be a quandle with a finite presentation and $\rho^{\prime}:Q^{\prime}\to X$ be a quandle homomorphism. In \cite{Ishii2022twisted}, A. Ishii and K. Oshiro showed that if there exists a quandle isomorphism $\varphi:Q\to Q^{\prime}$ such that $\rho=\rho^{\prime}\circ\varphi$, then $A(Q,\rho;f_1,f_2)$ and $A(Q^{\prime},\rho^{\prime};f_1,f_2)$ are related by a finite sequence of the following transformations $({\rm M1})\sim({\rm M8})$:
\begin{align*}
&({\rm M1})\ ({\bm a_1},\ldots,{\bm a_i},\ldots,{\bm a_j},\ldots,{\bm a_n})\leftrightarrow({\bm a_1},\ldots,{\bm a_i}+{\bm a_j}r,\ldots,{\bm a_j},\ldots,{\bm a_n})\ (r\in R),\\
&({\rm M2})\ \left(
\begin{array}{c}
{\bm a_1} \\
\vdots \\
{\bm a_i}\\
\vdots \\
{\bm a_j} \\
\vdots \\
{\bm a_n}
\end{array}
\right)\leftrightarrow\left(
\begin{array}{c}
{\bm a_1} \\
\vdots \\
{\bm a_i}+r{\bm a_j} \\
\vdots \\
{\bm a_j} \\
\vdots \\
{\bm a_n}
\end{array}
\right)\ (r\in R),\\ 
&({\rm M3})\ A\leftrightarrow\left(
\begin{array}{c}
A\\
{\bm 0}
\end{array}
\right),\ \ 
({\rm M4})\ A\leftrightarrow\left(
\begin{array}{cc}
A & {\bm 0}\\
{\bm 0} & 1
\end{array}
\right).\\
& ({\rm M5})  ({\bm a_1},\ldots,{\bm a_i},\ldots,{\bm a_j},\ldots,{\bm a_n})\leftrightarrow({\bm a_1},\ldots,{\bm a_j},\ldots,{\bm a_i},\ldots,{\bm a_n}),\\
& ({\rm M6}) ({\bm a_1},\ldots,{\bm a_i},\ldots,{\bm a_n})\leftrightarrow({\bm a_1},\ldots,{\bm a_i}u,\ldots,{\bm a_n})\ (u:\textrm{a unit of }R),\\
&({\rm M7})\ \left(
\begin{array}{c}
{\bm a_1} \\
\vdots \\
{\bm a_i}\\
\vdots \\
{\bm a_j} \\
\vdots \\
{\bm a_n}
\end{array}
\right)\leftrightarrow\left(
\begin{array}{c}
{\bm a_1} \\
\vdots \\
{\bm a_j} \\
\vdots \\
{\bm a_i} \\
\vdots \\
{\bm a_n}
\end{array}
\right),\ \ ({\rm M8})\ \left(
\begin{array}{c}
{\bm a_1} \\
\vdots \\
{\bm a_i}\\
\vdots \\
{\bm a_n}
\end{array}
\right)\leftrightarrow\left(
\begin{array}{c}
{\bm a_1} \\
\vdots \\
u{\bm a_i}\\
\vdots \\
{\bm a_n}
\end{array}
\right)\ (u:\textrm{a unit of }R).
\end{align*} 

Furthermore, if $R$ is a commutative ring, we have 
\[
E_d(A(Q,\rho;f_1,f_2))=E_d(A(Q^{\prime},\rho^{\prime};f_1,f_2)).
\]
\begin{comment}
\begin{exam}[\cite{Ishii2022twisted}]
Let $K$ be an oriented classical knot, $X$ be a quandle and $(f_1,f_2)$ be the Alexander pair of maps $f_1,f_2:X^2\to\Z[t^{\pm 1}]$ in Example \ref{exam:Alexander_pair_1}. 

In this setting, for any quandle homomorphism $\rho:Q(K)\to X$, the $f$-twisted Alexander matrix $A(Q(K),\rho;f_1,f_2)$ is an {\it Alexander matrix} (cf. \cite{Burde2014knots}). Thus, we see that ${\rm Coker}(A(Q(K),\rho;f_1,f_2))$ is isomorphic to the direct sum of the {\it Alexander module} of $K$ and $\Z$ and it holds that $E_d(A(Q(K),\rho;f_1,f_2))$ is equal to the {\it $d-1$-th elementary ideal} of $K$ (cf. \cite{Burde2014knots}).
\end{exam}
\end{comment}
 
\section{Surface knot invariants obtained from a quandle $2$-cocycle}
\label{sect:Alexander_matrix_surface_knot}
%In \cite{Taniguchitwisted}, the author showed that the $0$-th elementary ideal of an $f$-twisted Alexander matrix of knot quandles  of a classical knot using an Alexander pair in Example \ref{exam:Alexander_pair_2} is determined by the {\it quandle cocycle invariants} \cite{Carter2003quandle}. 
In this section, we will discuss the relationship between $f$-twisted Alexander matrices of the knot quandles of surface knots using an Alexander pair in Example \ref{exam:Alexander_pair_2} and the Carter-Saito-Satoh's invariant \cite{Carter2006ribbon}. At first, we will review definitions of surface knots and its diagrams. Refer to \cite{Carter2004surfaces,Kamada2017surface} for details

A {\it surface knot} is a connected oriented closed surface smoothly embedded in 4-space $\R^4$ up to ambient isotopies. When the surface is homeomorphic to a 2-sphere, it is called a {\it 2-knot}. We fix a projection ${\rm pr}:\R^4\to \R^3$. Every surface knot can be perturbed slightly in $\R^4$ so that the singularity set of ${\rm pr}(F)$ consists of double points, isolated triple points, and isolated branched points as illustrated in Figure \ref{singular_set}. Each connected component of the set of all double points of ${\rm pr}(F)$ is a curve in $\R^3$, which is called a {\it double point curve}. For each double point curve $\delta$, the preimage ${\rm pr}^{-1}(\delta)$ consists a union of two curves in $F$. Then, we call the curves which is under the other the {\it lower decker curve}. 

\begin{figure}[h]
	 \centering{\includegraphics[height=2.5cm]{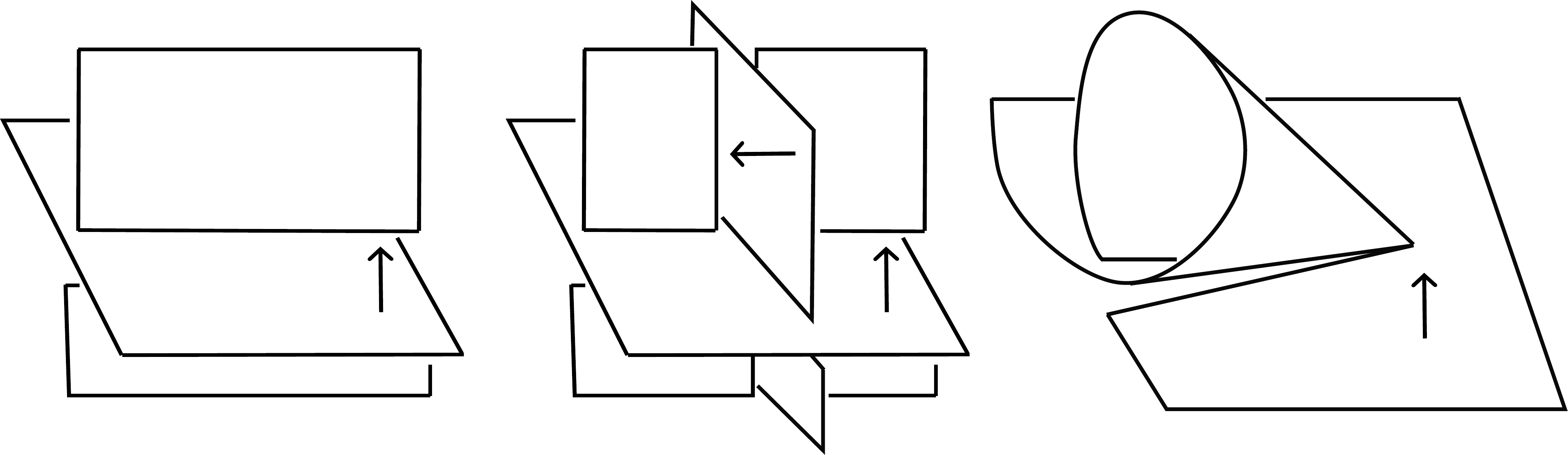}}
	\caption{The singular points}
	\label{singular_set}
 \end{figure}

The crossing information is indicated in ${\rm pr}(F)$ as follows: At each double point curve, two sheets intersects locally, one of which is under the other relative to the projection direction of $p$. Then, the lower sheet is broken by the upper sheet. A {\it diagram} of $F$ is the image ${\rm pr}(F)$ with such crossing information. We can regard a diagram as a union of disjoint compact, connected, surfaces. For a diagram $D$, we denote by $S(D)$ the set of such connected surfaces of $D$. We call an element of $S(D)$ a {\it sheet}. Since the surface is oriented, we take normal vectors $\vec{n}$ to ${\rm pr}(F)$ such that the triple $(\vec{v_1},\vec{v_2},\vec{n})$ represents the orientation of $\R^3$, where $(\vec{v_1},\vec{v_2})$ defines the orientation of ${\rm pr}(F)$. Such normal vectors are defined on the ${\rm pr}(F)$ at all points othar than isolated branched points. We call the normal orientation represented by $\vec{n}$ is called the {\it normal orientation} determined from the orientation of $F$. In this paper, we indicate the orientations of sheets by the normal orientation.

Let $D$ be a diagram of a surface knot $F$. For each double point curve $\delta$, let $x_i,x_j,x_k$ be sheets around the double point curve $\delta$ such that the normal orientation of the upper sheet $x_j$ points from $x_i$ to $x_k$ as shown in Figure \ref{coloring_rule}. 
\begin{figure}[h]
	 \centering{\includegraphics[height=3cm]{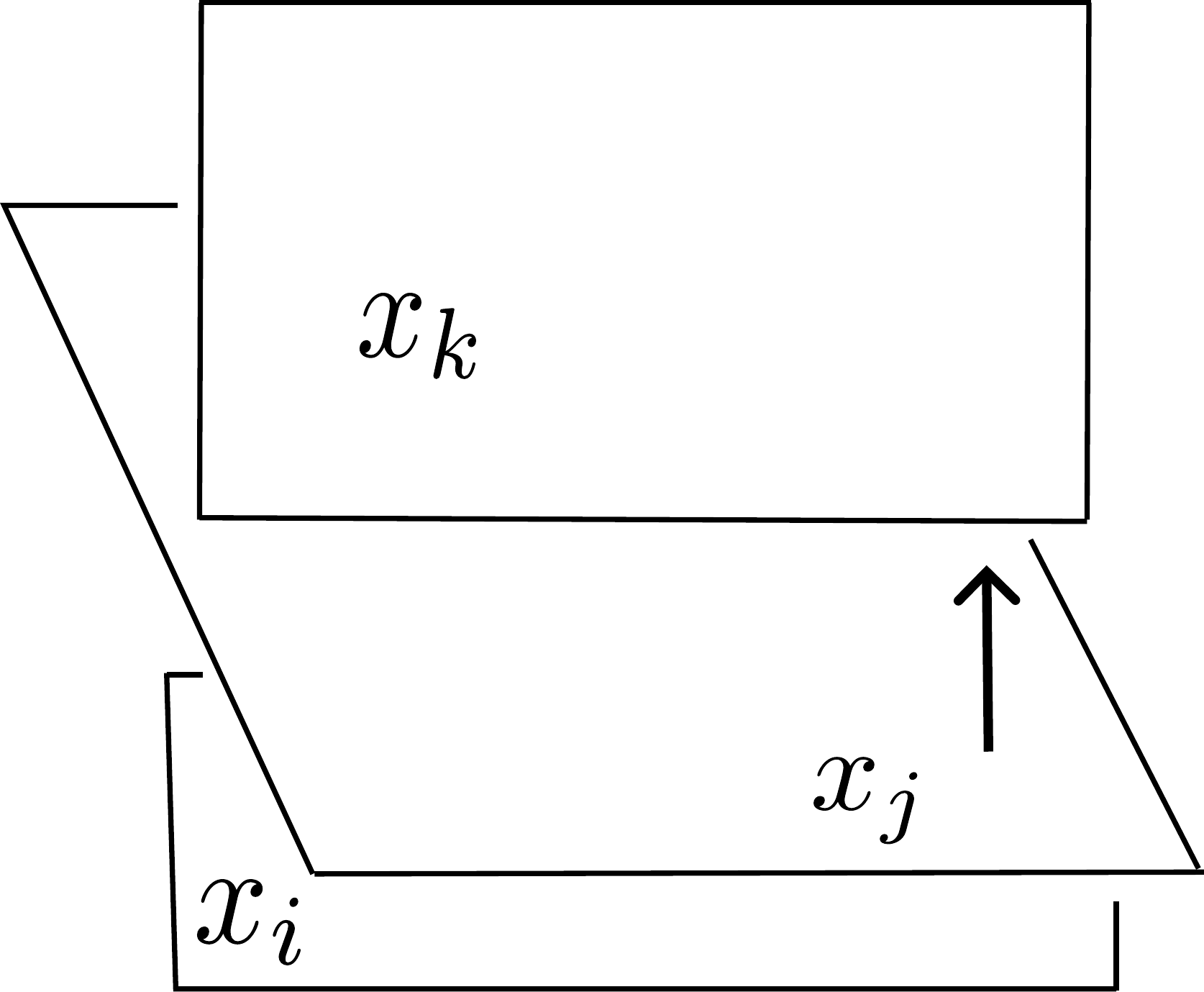}}
	\caption{The sheets around double point curve $\delta$}
	\label{coloring_rule}
 \end{figure}
The sheet $x_i$ is called the {\it source sheet} of $\delta$. Then, we define the relator $r_\delta$ by $(x_i\ast x_j,x_k)$. The knot quandle of a surface knot $F$ has the following presentation:
\[
\langle S(D)\mid \{ r_{\delta}\mid \delta:\textrm{ a double point curve of }D\}\rangle.
\]
This presentaion is called the {\it Wirtinger presentation} of $Q(F)$ with respect to the diagram $D$.

Next, we will introduce the Carter-Saito-Satoh's invariant \cite{Carter2006ribbon}. Let $X$ be a quandle. A map $c:S(D)\to X$ is an {\it $X$-coloring of $D$} if it satisfies the following condition near each double point curve: if $x_i,x_j,x_k$ are the sheets as illustrated in Figure \ref{coloring_rule}, then it holds that $c(x_i)\ast c(x_j)=c(x_k)$. The element $c(x)$ assigned to the sheet $x$ is called the {\it color} of $x$.  We denote the set of all $X$-colorings of $D$ by ${\rm Col}_X(D)$. 

Let $\rho:Q(F)\to X$ be a quandle homomorphism. Using the Wirtinger presentation of $Q(F)$ with respect to the diagram $D$, we can regard each sheet as an element of $Q(F)$. Hence, let us define the map $c_{\rho}:S(D)\to X$ by $c_{\rho}(x)=\rho(x)$ for any $x\in S(D)$. Then, the map $c_{\rho}$ is an $X$-coloring of $D$. In this paper, we call $c_{\rho}$ is the {\it $X$-coloring of $D$ corresponding to $\rho$}. It is known that the map ${\rm Hom}(Q(F),X)\to{\rm Col}_X(D);\rho\mapsto c_\rho$ is bijective.

Let $c:S(D)\to X$ be an $X$-coloring of $D$ and $\theta:X^2\to A$ be a quandle $2$-cocycle. We consider an oriented immersed circle $L$ on $F$. We denote ${\rm pr}(L)$ by $L^D$. We assume that $L^D$ intersects the double point curves transversely, and misses triple points and branched points. Let $d_1,\ldots,d_n$ be points on $F$ at which $L$ intersects the lower decker curves. In this paper, we denote ${\rm pr}(x)$ by $x^D$ for $x\in F$. For each $d_l$, we give the sign $\varepsilon(d_{l})\in\{\pm 1\}$ such that $\varepsilon(d_{l})=+1$ if and only if the orientation of $L^D$ agrees with the normal orientation of the upper sheet around $d_{l}^D$. Then, let us define an element $W_{\theta}(d_l,c)$ as follows: Let $x_i,x_j,x_k$ be the sheets around $d_l^D$ such that $x_j$ is the upper sheet and $x_i$ is the source sheet as shown in Figure \ref{weight_surface_knot}. We set $W_{\theta}(d_l,c):=\varepsilon(d_{l})\theta(c(x_i),c(x_j))$. Moreover, we put $W_{\theta}(L,c):=\sum^n_{l=1}W_{\theta}(d_l,c)$. 
\begin{figure}[h]
	 \centering{\includegraphics[height=4cm]{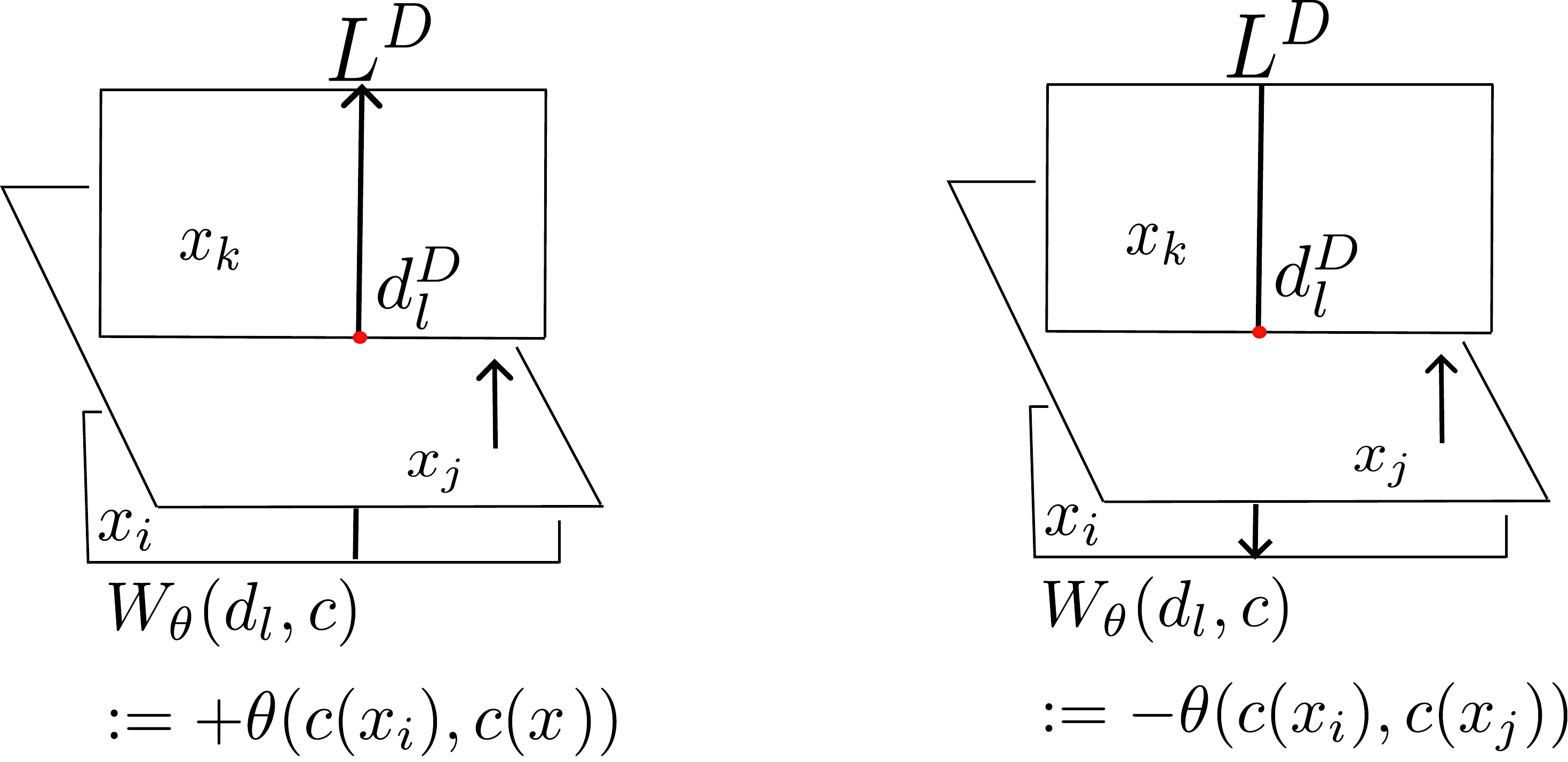}}
	\caption{The weight at $d_l$}
	\label{weight_surface_knot}
 \end{figure}
\begin{lemm}[\cite{Carter2006ribbon}]
If $L$ and $L^{\prime}$ are homologous, then we have $W_{\theta}(L,c)=W_{\theta}(L^{\prime},c)$.
\end{lemm}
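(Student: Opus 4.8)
The statement asserts that the weight sum $W_\theta(L,c)$ depends only on the homology class of $L$ in $H_1(F)$. The plan is to check invariance under the elementary moves that generate the homology relation on immersed circles on $F$: (i) ambient isotopy of $L$ on $F$ that keeps $L^D$ transverse to the double point curves and away from triple and branch points, (ii) passing $L$ across a triple point, across a branch point, or tangentially through a double point curve (the "Reidemeister-type" moves for curves on a surface diagram), and (iii) the relation $W_\theta(L \sqcup L', c) = W_\theta(L,c) + W_\theta(L',c)$ together with the vanishing of $W_\theta$ on a nullhomologous curve, in particular on a small circle bounding a disk on $F$.

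First I would observe that for an isotopy of type (i) the set of intersection points $\{d_l\}$ with the lower decker curves, their signs $\varepsilon(d_l)$, and the associated colors $c(x_i), c(x_j)$ are all unchanged, so $W_\theta$ is literally constant; the content is entirely in (ii). For a move in which $L$ sweeps across a triple point, the circle picks up or loses intersections with the three lower decker curves emanating from that triple point; the resulting change in $W_\theta(L,c)$ is, up to sign, precisely the quandle $2$-cocycle condition $\theta(x,y)+\theta(x^y,z)=\theta(x,z)+\theta(x^z,y^z)$ evaluated on the colors around the triple point, hence zero. For a move across a branch point, $L$ acquires two intersections with the lower decker curve of opposite sign and identical associated colors (the source sheet and upper sheet coincide in the limit at the branch point), so the two contributions $+\theta(a,b)$ and $-\theta(a,b)$ cancel; a tangential move across a double point curve similarly contributes a cancelling pair $\pm\theta(a,b)$. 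Finally, a small circle bounding an embedded disk $\Delta$ on $F$ can be isotoped so that $\Delta$ misses all lower decker curves, whence $W_\theta = 0$ on it; combined with the additivity over disjoint unions and the fact that $L$ and $L'$ being homologous means $L \sqcup (-L')$ bounds (up to the moves above), this yields $W_\theta(L,c) = W_\theta(L',c)$.

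I would organize the write-up by fixing, once and for all, a generic position for $L^D$ relative to $D$, then enumerating the finite list of local moves relating any two generic representatives of a homology class (this is the standard "movie" analysis for curves on a surface knot diagram, as in Carter--Saito--Satoh), and checking the weight is unchanged under each. The main obstacle is the bookkeeping at the triple point: one must carefully track which of the three sheets near the triple point is the source sheet and which is the upper sheet for each of the three double point curves passing through it, assign the correct signs $\varepsilon$ according to the direction in which $L$ crosses, and verify that the net change matches exactly one instance of the cocycle identity (rather than, say, its negative or a shifted version). This is where the precise conventions in Figure \ref{coloring_rule} and Figure \ref{weight_surface_knot} must be used with care; everything else reduces to sign cancellations or to the additivity and vanishing-on-disks observations.
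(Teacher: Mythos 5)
This lemma is not proved in the paper at all: it is imported verbatim from Carter--Saito--Satoh \cite{Carter2006ribbon}, so your sketch can only be measured against the standard argument there. Your local checks are the right ingredients and match that argument: transverse isotopy changes nothing; sweeping $L$ across a triple point changes the weight by an instance of the $2$-cocycle identity $\theta(x,y)+\theta(x^y,z)=\theta(x,z)+\theta(x^z,y^z)$, hence by $0$; a tangency with a double point curve creates or cancels two intersections with the same lower decker arc carrying the same colors and opposite signs. One local detail is off: at a branch point the generic event is not a pair of opposite-sign intersections with the lower decker curve, but a single intersection sliding from the lower decker half of the decker arc to the upper decker half (which carries no weight); its contribution is $\pm\theta(a,a)$, which vanishes by the first cocycle axiom $\theta(x,x)=0_A$. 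Your observation that the source and upper sheets coincide at the branch point is the right reason, but the ``two cancelling terms'' picture is not the correct local model.

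The genuine gap is in step (iii), where you pass from homotopy-type moves to homology. Invariance under (i)--(ii) only gives invariance under (free) homotopy, and your reduction of ``homologous'' to the moves is asserted, not proved: ``vanishing of $W_\theta$ on a nullhomologous curve'' is essentially the statement being proved (take $L'=\emptyset$), and you only establish it for a small circle bounding a disk that can be pushed off the lower decker set. A general nullhomologous curve bounds a subsurface, not a disk, and that subsurface cannot in general be made disjoint from the lower decker curves, so additivity over disjoint unions plus the small-circle case does not close the argument. The standard ways to finish are either (a) note that $W_\theta(\,\cdot\,,c)$ is additive under concatenation of based loops and homotopy invariant, hence defines a homomorphism $\pi_1(F)\to A$, which factors through $H_1(F)$ because $A$ is abelian and free homotopy classes determine well-defined homology classes; or (b) dually, observe that the colored weights assemble into a $1$-cocycle on $F$ (supported along the lower decker set), closedness being exactly your triple point and branch point checks, so its evaluation depends only on the homology class of $L$. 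Either supplement turns your sketch into a complete proof; without one of them the step ``$L\sqcup(-L')$ bounds, up to the moves above'' is unjustified.
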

Hence, for each $\lambda\in H_1(F)$, we can define $W_{\theta}(\lambda,c):=W_{\theta}(L_\lambda,c)$, where $L_\lambda$ is a representative curve of $\lambda$. Then, we denote a multi-set  $\{W_{\theta}(\lambda,c)\mid c\in{\rm Col}_{X}(D)\}$ by $\Omega_{\theta}(\lambda)$. Furthermore, we define a family of multi-sets of $A$ by $\Omega_{\theta}(F)=\{\Omega_{\theta}(\lambda)\mid\lambda\in H_1(F)\}$.
\begin{prop}[\cite{Carter2006ribbon}]
The family $\Omega_{\theta}(F)$ does not depend on the choice of a diagram $D$ of $F$.
\end{prop}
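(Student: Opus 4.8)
The plan is to reduce the statement to a purely local check by invoking Roseman's theorem: any two diagrams of ambient isotopic surface knots are related by a finite sequence of ambient isotopies of $\R^{3}$ together with Roseman moves, each of which is supported in a ball $B\subset\R^{3}$. An ambient isotopy of $\R^{3}$ carries diagrams, lower decker curves, colorings and colored curves to one another compatibly, so $W_{\theta}$ is visibly unchanged by such moves; hence it suffices to compare the invariant computed from two diagrams $D$ and $D'$ that coincide outside a ball $B$.

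First I would set up the bookkeeping. Since a Roseman move is realized by an ambient isotopy of $\R^{4}$ which is the identity outside ${\rm pr}^{-1}(B)$, it induces a canonical identification of $H_{1}(F)$ before and after the move, and it alters the Wirtinger presentation of $Q(F)$ only by Tietze-type moves involving the generators coming from sheets that meet $B$. Consequently, under the bijection ${\rm Hom}(Q(F),X)\to{\rm Col}_{X}(D)$, $\rho\mapsto c_{\rho}$, recalled above, a coloring of $D$ and a coloring of $D'$ correspond exactly when they are induced by the same $\rho$, and then a sheet of $D$ lying outside $B$ and the matching sheet of $D'$ receive the same color. It therefore suffices to prove that for every $\lambda\in H_{1}(F)$ and every $\rho$ the weight $W_{\theta}(\lambda,c_{\rho})$ agrees for $D$ and $D'$.

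The key geometric step is the choice of representative curve. For a generic ball $B$ the intersection ${\rm pr}^{-1}(B)\cap F$ is a disjoint union of disks in $F$; surgering $L_{\lambda}$ along such a disk (replacing the arcs of $L_{\lambda}\cap\Delta$ by pushed-off boundary arcs and discarding interior circles) removes all of its intersections with the disk without changing the homology class, so $\lambda$ may be represented by a curve $L_{\lambda}$ that misses ${\rm pr}^{-1}(B)\cap F$, and after a further generic perturbation is transverse to the lower decker curves and misses triple and branched points. Then $L_{\lambda}^{D}=L_{\lambda}^{D'}$ lies entirely outside $B$, every crossing of $L_{\lambda}$ with a lower decker curve occurs in the common part of the two diagrams, and at each such crossing the source sheet and the upper sheet are corresponding sheets of $D$ and $D'$ carrying the same color. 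By the preceding Lemma (invariance of $W_{\theta}(L,c)$ under homology of $L$), $W_{\theta}(\lambda,c_{\rho})$ can be computed from this $L_{\lambda}$ in either diagram, and the two computations agree term by term. Hence $\Omega_{\theta}(\lambda)$, and so the family $\Omega_{\theta}(F)$, is independent of the diagram.

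The main obstacle is the bookkeeping of the second paragraph: one must check carefully that a Roseman move affects the Wirtinger presentation only through Tietze moves leaving the $\rho$-images of sheets away from $B$ unchanged, and that the realizing isotopy of $\R^{4}$ can be taken to fix $F$ outside ${\rm pr}^{-1}(B)$, so that the identifications of $H_{1}(F)$ and of the coloring sets are canonical. By contrast, the surgery pushing $L_{\lambda}$ off ${\rm pr}^{-1}(B)\cap F$ is routine once general position is arranged, and no case analysis of the individual Roseman moves is needed, because the curve is chosen never to enter the region where the two diagrams differ.
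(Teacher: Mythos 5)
This paper does not actually prove the proposition: it is quoted from Carter--Saito--Satoh \cite{Carter2006ribbon}, and your argument is essentially the one used there and in standard proofs of invariance of coloring/weight invariants of surface knots: reduce via Roseman's theorem to a single move supported in a ball $B$, identify ${\rm Col}_X(D)$, ${\rm Col}_X(D')$ and $H_1(F)$ through the realizing isotopy supported in ${\rm pr}^{-1}(B)$, and evaluate $W_{\theta}(\lambda,c_{\rho})$ on a representative curve pushed off ${\rm pr}^{-1}(B)\cap F$, where the two diagrams, their lower decker sets and the relevant sheet colors coincide, so the weights agree term by term; together with the preceding Lemma this gives the claim, and the bookkeeping you flag is exactly the standard argument that corresponding colorings assign the same color at every point of the diagram outside $B$. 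One small correction of wording: that ${\rm pr}^{-1}(B)\cap F$ is a disjoint union of disks is not a genericity statement about $B$ but follows from the explicit local models of the Roseman moves, each of which involves only finitely many embedded disk pieces of $F$.
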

Thus, $\Omega_{\theta}(F)$ is an invariant of surface knots. The aim of this section is to show the following Theorem.
\begin{theo}
\label{theo:main_1}
Let $D$ be a diagram of surface knot $F$, $X$ a quandle, $A$ an abelian group and $\theta:X^2\to A$ a quandle 2-cocycle. Then, for any quandle homomorphism $\rho:Q(F)\to X$, it holds that
\[
(\{1\cdot W_{\theta}(\lambda,c_{\rho})-1\cdot 0_A\mid \lambda\in H_1(F;\Z)\})=E_0(A(Q(F),\rho;f_{\theta},0)),
\]
where $c_{\rho}$ is the $X$-coloring of $D$ corresponding to $\rho$.
\end{theo}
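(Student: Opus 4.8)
The plan is to write down $A(Q(F),\rho;f_\theta,0)$ explicitly from the Wirtinger presentation, reduce it by the moves $({\rm M1})$–$({\rm M8})$ to a single column, and then match the resulting ideal with the left-hand side by means of the homomorphism $W_\theta(-,c_\rho)\colon H_1(F;\Z)\to A$. Starting from the Wirtinger presentation $\langle S(D)\mid\{r_\delta\}\rangle$ and using Proposition \ref{prop:calculation} (equivalently, the defining rules of the $f_\theta$-derivative), one computes that for a double point curve $\delta$ with source sheet $x_i$, upper sheet $x_j$ and target sheet $x_k$ the $\delta$-th row of $A(Q(F),\rho;f_\theta,0)$ has $f_\theta(c_\rho(x_i),c_\rho(x_j))=1\cdot\theta(c_\rho(x_i),c_\rho(x_j))$ in the $x_i$-column, $-1$ in the $x_k$-column, and $0$ elsewhere (when $x_i=x_k$ the single entry in that column is $1\cdot\theta(c_\rho(x_i),c_\rho(x_j))-1$). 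Thus the matrix is a ``weighted incidence matrix'' of the graph $\Gamma$ whose vertices are the sheets of $D$ and whose edges are the double point curves, each $\delta$ directed from its source sheet to its target sheet; since cutting $F$ along the lower decker curves produces exactly the sheets and $F$ is connected, $\Gamma$ is connected.

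Next I would fix a spanning tree $T$ of $\Gamma$ rooted at a sheet $x_0$; it has $|S(D)|-1$ edges, and each tree relation contributes a unit entry ($1\cdot\theta$ in a source column or $-1$ in a target column). Processing the tree leaf by leaf, these relations eliminate, by $({\rm M1})$, $({\rm M2})$, $({\rm M5})$–$({\rm M8})$ and $({\rm M4})$, the corresponding rows and columns, leaving only the $x_0$-column and the rows indexed by the non-tree edges. The tree relations amount to substitutions $x\mapsto(1\cdot g_x)\,x_0$, where $g_x\in A$ is the signed sum of the $2$-cocycle values of the tree edges along the path from $x_0$ to $x$; under them the entry left in the row of a non-tree edge $\delta$ becomes a unit times $1\cdot\bigl(\theta(c_\rho(x_i),c_\rho(x_j))+g_{x_i}-g_{x_k}\bigr)-1\cdot 0_A$. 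Letting $\lambda_\delta\in H_1(F;\Z)$ be the class of the loop that runs from $x_0$ to $x_i$ along $T$, crosses $\delta$ into $x_k$, and returns to $x_0$ along $T$, a direct weight computation gives $W_\theta(\lambda_\delta,c_\rho)=\theta(c_\rho(x_i),c_\rho(x_j))+g_{x_i}-g_{x_k}$. Since $E_0$ is preserved by $({\rm M1})$–$({\rm M8})$ and the reduced matrix is a single column, $E_0(A(Q(F),\rho;f_\theta,0))=\bigl(\{\,1\cdot W_\theta(\lambda_\delta,c_\rho)-1\cdot 0_A\mid\delta\ \text{a non-tree edge}\,\}\bigr)$.

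Finally I would identify this ideal with the left-hand side. The map $W_\theta(-,c_\rho)\colon H_1(F;\Z)\to A$ is a group homomorphism: the weight is additive in the representing curve, and by the cited lemma of Carter–Saito–Satoh it depends only on the homology class. Using the identity $1\cdot(a+b)-1\cdot 0_A=(1\cdot a)\bigl(1\cdot b-1\cdot 0_A\bigr)+\bigl(1\cdot a-1\cdot 0_A\bigr)$ in $\Z[A]$ together with invertibility of $1\cdot a$, the ideal $\bigl(\{\,1\cdot a-1\cdot 0_A\mid a\in B\,\}\bigr)$ depends only on the subgroup $\langle B\rangle$, so both the left-hand side and the ideal obtained above are equal to $\bigl(\{\,1\cdot a-1\cdot 0_A\mid a\in W_\theta(H_1(F;\Z),c_\rho)\,\}\bigr)$ once one knows that $\langle W_\theta(\lambda_\delta,c_\rho)\mid\delta\notin T\rangle$ is the whole image. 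For this last point one uses that $W_\theta(\lambda,c_\rho)$ depends only on how a transverse representative of $\lambda$ meets the lower decker curves, and that this meeting pattern is a closed walk in $\Gamma$ whose $H_1(\Gamma)$-class expands as $\sum_\delta c_\delta[\lambda_\delta]$, forcing $W_\theta(\lambda,c_\rho)=\sum_\delta c_\delta\,W_\theta(\lambda_\delta,c_\rho)$. I expect the main obstacle to be the bookkeeping: carrying out the spanning-tree reduction while tracking the unit factors and the substitutions $x\mapsto(1\cdot g_x)x_0$ so that the surviving entries are exactly the asserted differences, and pinning down the sign convention $\varepsilon(d_l)$ in the definition of $W_\theta$ relative to the source/target orientation of $\Gamma$ so that crossing $\delta$ from source to target contributes precisely $+\theta(c_\rho(x_i),c_\rho(x_j))$; once the signs are fixed, the homological step is routine.
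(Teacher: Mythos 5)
Your proposal is correct, and its computational engine is the same as the paper's: for the Alexander pair $(f_\theta,0)$ the $f_\theta$-derivative of a relator collapses to a single entry of the form $1\cdot(\text{weight of a loop})-1\cdot 0_A$, so the whole matrix reduces to one column of such entries. The execution differs in two places. For the inclusion $E_0\subseteq(\{1\cdot W_\theta(\lambda,c_\rho)-1\cdot 0_A\})$, the paper first changes the presentation by Tietze moves, replacing the Wirtinger relators by relators $(x_n^{w},x_i)$ and $(x_n^{w},x_n)$ built from based paths $\gamma_i$ and loops $L_\delta=\gamma_{s(\delta)}\cdot\gamma_\delta\cdot\gamma_{t(\delta)}^{-1}$, so that the matrix is already triangular; you instead keep the Wirtinger presentation and do the spanning-tree elimination directly with (M1)--(M8), which amounts to the same thing (your tree paths play the role of the $\gamma_i$, your $\lambda_\delta$ the role of the $L_\delta$), and your sign bookkeeping does match the paper's convention that crossing a lower decker curve from source to target contributes $+\theta(c_\rho(x_i),c_\rho(x_j))$. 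For the reverse inclusion the routes genuinely diverge: the paper adjoins, for each $\lambda\in H_1(F;\Z)$, a redundant relator $(x_n^{v_1^{\varepsilon(e_1)}\cdots v_{l_\lambda}^{\varepsilon(e_{l_\lambda})}},x_n)$ coming from a based representative of $\lambda$ and reads off $1\cdot W_\theta(\lambda,c_\rho)-1\cdot 0_A$ as an entry of the enlarged matrix, whereas you show that the fundamental cycles $\lambda_\delta$ already have weights generating the subgroup $W_\theta(H_1(F;\Z),c_\rho)$ of $A$ (via the closed-walk decomposition in the sheet/double-point-curve graph $\Gamma$ and additivity of the weight), and then use the fact, which is Remark \ref{remark:ideal} of the paper, that the ideal $(\{1\cdot a-1\cdot 0_A\mid a\in B\})$ depends only on the subgroup generated by $B$. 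Your route never leaves the Wirtinger presentation and makes the graph-theoretic structure explicit; the paper's route avoids the cycle-space argument at the cost of one extra presentation change per homology class.
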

\begin{remark}
\label{remark:ideal}
{\rm
Let $\{ b_1,\ldots,b_{2g}\}$ be a basis of $H_1(F)$. Since $W_{\theta}(\lambda_1+\lambda_2,c)$ is equal to $W_{\theta}(\lambda_1,c)+W_{\theta}(\lambda_2,c)$ for any $\lambda_1,\lambda_2\in H_1(F)$, we have
\begin{eqnarray*}
1\cdot W_{\theta}(\lambda_1+\lambda_2,c)-1\cdot 0_A&=&1\cdot( W_{\theta}(\lambda_1,c)+W_{\theta}(\lambda_2,c))-1\cdot 0_A\\
&=&1\cdot W_{\theta}(\lambda_1,c)(1\cdot W_{\theta}(\lambda_2,c)-1\cdot 0_A)\\
&&+1\cdot W_{\theta}(\lambda_1,c)-1\cdot 0_A.
\end{eqnarray*}
 Hence, the ideal generated by $\{1\cdot W_{\theta}(\lambda,c)-1\cdot 0_A\mid \lambda\in H_1(F;\Z)\}$ coincides with the ideal $(1\cdot W_{\theta}(b_1,c)-1\cdot 0_A,\ldots,1\cdot W_{\theta}(b_{2g},c)-1\cdot 0_A)$. %This implies that the ideal $(\{1\cdot W_{\theta}(\lambda,c)-1\cdot 0_A\mid \lambda\in H_1(F;\Z)\})$ is completely determined by $W_{\theta}(b_1,c),\ldots,W_{\theta}(b_{2g},x)$
}
\end{remark}
\begin{proof}[Proof of Theorem \ref{theo:main_1}]
Let $D$ be a diagram of a surface knot $F$, $S(D)=\{ x_1,\ldots,x_n\}$ the set of all sheets of $D$, ${\rm Ldeck(D)}$ the set of all lower decker curves and $q$ a point in $x_n$. For each $i\in\{1,\ldots,n-1\}$, we take a path $\gamma_i$ in $F$ starting from ${\rm pr}^{-1}(q)$ and terminating at a point of ${\rm pr}^{-1}(x_i)$ as shown in Figure \ref{path_gamma}. We assume that ${\rm pr}(\gamma_i)$ intersects the double point curve transversely, and misses triple points and branched points. We denote the path $[0,1]\to F;x\to q$ by $\gamma_n$. For each $i\in\{1,\ldots,n\}$, let $m_i$ be the cardinality of the set $\gamma_i\cap{\rm Ldeck}(D)$ and $d_{i1},\ldots,d_{im_i}$ the points at which $\gamma_i$ intersects ${\rm Ldeck}(D)$ in turn with the orientation. We denote the upper-sheet at $d_{ij}^D$ by $u_{ij}$. Here, $u_{ij}$ is an element of $S(D)$. Then, let us define the relator $r_i$ by $(x_n^{u_{i1}^{\varepsilon(d_{i1})}\cdots u_{im_i}^{\varepsilon(d_{im_i})}},x_i)$. 
\begin{figure}[h]
	 \centering{\includegraphics[height=3cm]{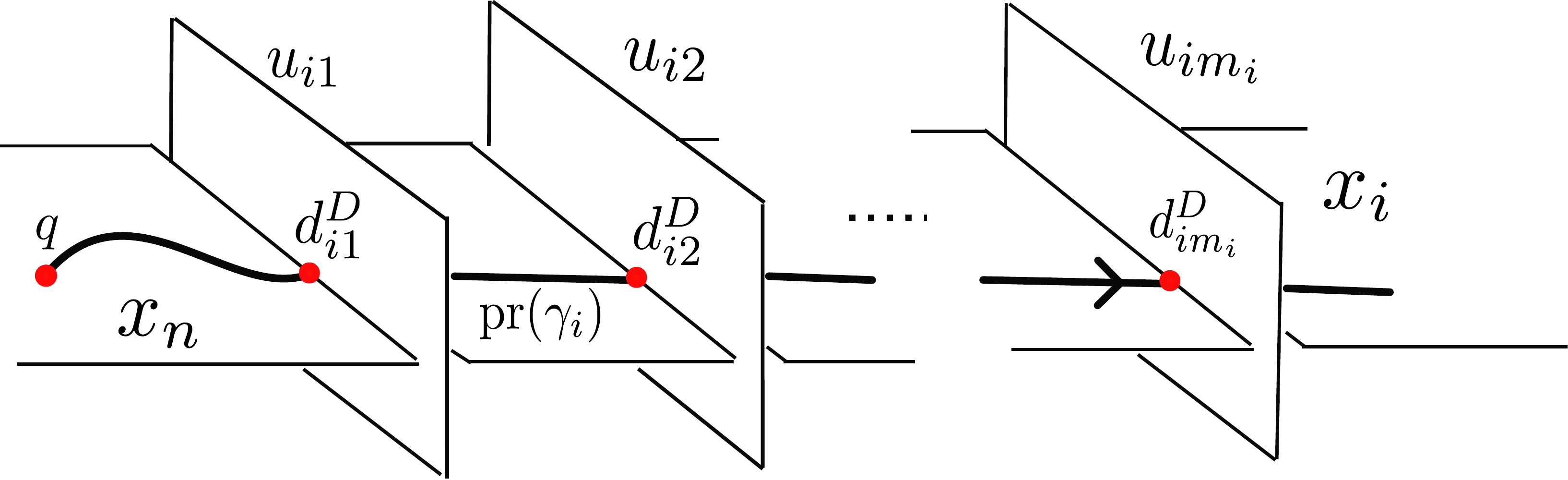}}
	\caption{The path $\gamma_i$}
	\label{path_gamma}
 \end{figure}

 %For each double point curve $\delta$, let $x_i,x_j$ be the lower sheets around $\delta$. We denote the upper sheet separating $x_i$ and $x_j$ by $u_\delta$. Replacing index if necessary, we may assume that the orientation of $u_\delta$ is directed from $x_i$ to $x_j$. Let us define the relator $s_\delta$ by $(x_n^{u_{i1}^{\varepsilon(d_{i1})}\cdots u_{im_i}^{\varepsilon(d_{im_i})}u_\delta u_{jm_j}^{-\varepsilon(d_{jm_j})}\cdots u_{j1}^{-\varepsilon(d_{j1})}},x_n)$. Using Tietze's moves, we see that 

We denote the upper sheet around a double point curve $\delta$ by $u_\delta$. For each double point curve $\delta$, let $s(\delta)$ be an element of $\{1,\ldots,n\}$ satisfying that $x_{s(\delta)}$ is the source sheet of $\delta$, and $t(\delta)$ an element of $\{1,\ldots,n\}$ satisfying that the normal orientation of the upper-sheet $u_\delta$ points to $x_{t(\delta)}$. Then, we define the relator $r_\delta$ by 
\[
r_\delta=\left( x_n^{u_{s(\delta)1}^{\varepsilon\left(d_{s(\delta)1}\right)}\cdots u_{s(\delta)m_{s(\delta)}}^{\varepsilon\left(d_{s(\delta)m_{s(\delta)}}\right)}u_\delta u_{t(\delta)m_{t(\delta)}}^{-\varepsilon\left(d_{t(\delta)m_{t(\delta)}}\right)}\cdots u_{t(\delta)1}^{-\varepsilon\left(d_{t(\delta)1}\right)}},x_n\right).
\]
 We see that 
$\left\langle x_1,\ldots x_n~
\begin{array}{|c}
r_1,\ldots,r_{n-1}\\
r_\delta \ (\delta:\textrm{a double point curve})
\end{array}
\right\rangle
$ and the Wirtinger presentation of $D$ are related by a finite sequence of the Tietze's moves. Thus, $\left\langle x_1,\ldots x_n~
\begin{array}{|c}
r_1,\ldots,r_{n-1}\\
r_\delta \ (\delta:\textrm{a double point curve})
\end{array}
\right\rangle
$ is also a presentation of the knot quandle $Q(F)$. We fix this presentation. Let $\rho:Q(F)\to X$ be a quandle homomorphism and $c_{\rho}:S(D)\to X$ the $X$-coloring of $D$ corresponding to $\rho$. Then, we consider the $f$-twisted Alexander matrix of $(Q(F),\rho)$ with resepect to the fixed presentation. 
%Since the proof of Theorem \ref{theo:main_1}, $A(Q(F),\rho;f_{\theta},0)$ is equivalent to the $|\A|\times 1$ matrix $\left(\frac{\partial_{f\circ\rho}}{\partial{x_n}}(s_\al)\right)_{\al\in \A}$. 

At first, let us consider $\frac{\partial_{f_{\theta}\circ\rho^2}}{\partial{x_j}}(r_i)$. By Proposition \ref{prop:calculation}, for any $i\in\{1,\ldots,n-1\}$ and $j\in\{1,\ldots,n\}$, there is an element $a_i\in A$ such that 
\[
\frac{\partial_{f_{\theta}\circ\rho^2}}{\partial{x_j}}\left(x_n^{u_{i1}^{\varepsilon(d_{i1})}\cdots u_{im_i}^{\varepsilon(d_{im_i})}}\right)=1\cdot a_i\frac{\partial_{f_{\theta}\circ\rho^2}}{\partial{x_j}}(x_n).
\] 
Hence, we have the following equalities:
\begin{eqnarray*}
\frac{\partial_{f_{\theta}\circ\rho^2}}{\partial{x_j}}(r_i)&=&\frac{\partial_{f_{\theta}\circ\rho^2}}{\partial{x_j}}\left(x_n^{u_{i1}^{\varepsilon(d_{i1})}\cdots u_{im_i}^{\varepsilon(d_{im_i})}}\right)-\frac{\partial_{f_{\theta}\circ\rho^2}}{\partial{x_j}}(x_i)\\
&=&1\cdot a_i\frac{\partial_{f_{\theta}\circ\rho^2}}{\partial{x_j}}(x_n)-\frac{\partial_{f_{\theta}\circ\rho^2}}{\partial{x_j}}(x_i)\\
&=&\begin{cases}
1\cdot a_i\hspace{5.5mm} (j=n)\\
-1\cdot 0_A\ (j=i)\\
0\hspace{11.5mm} (j\neq i,n).
\end{cases}
\end{eqnarray*}

Next, we will discuss $\frac{\partial_{f_{\theta}\circ\rho^2}}{\partial{x_j}}(r_\delta)$ for any double point curve $\delta$ and $j\in\{1,\ldots,n\}$. For each double point curve $\delta$, we define the immersed curve $L_\delta$ on $F$ by $L_\delta=\gamma_{s(\delta)}\cdot\gamma_\delta\cdot\gamma_{t(\delta)}^{-1}$, where $\gamma_\delta$ is a path from the terminal point of $\gamma_{s(\delta)}$ to the terminal point of $\gamma_{t(\delta)}$ which intersects lower decker curves only at point $d_\delta$ as illustrated in Figure \ref{path_alpha}.
\begin{figure}[h]
	 \centering{\includegraphics[height=4.5cm]{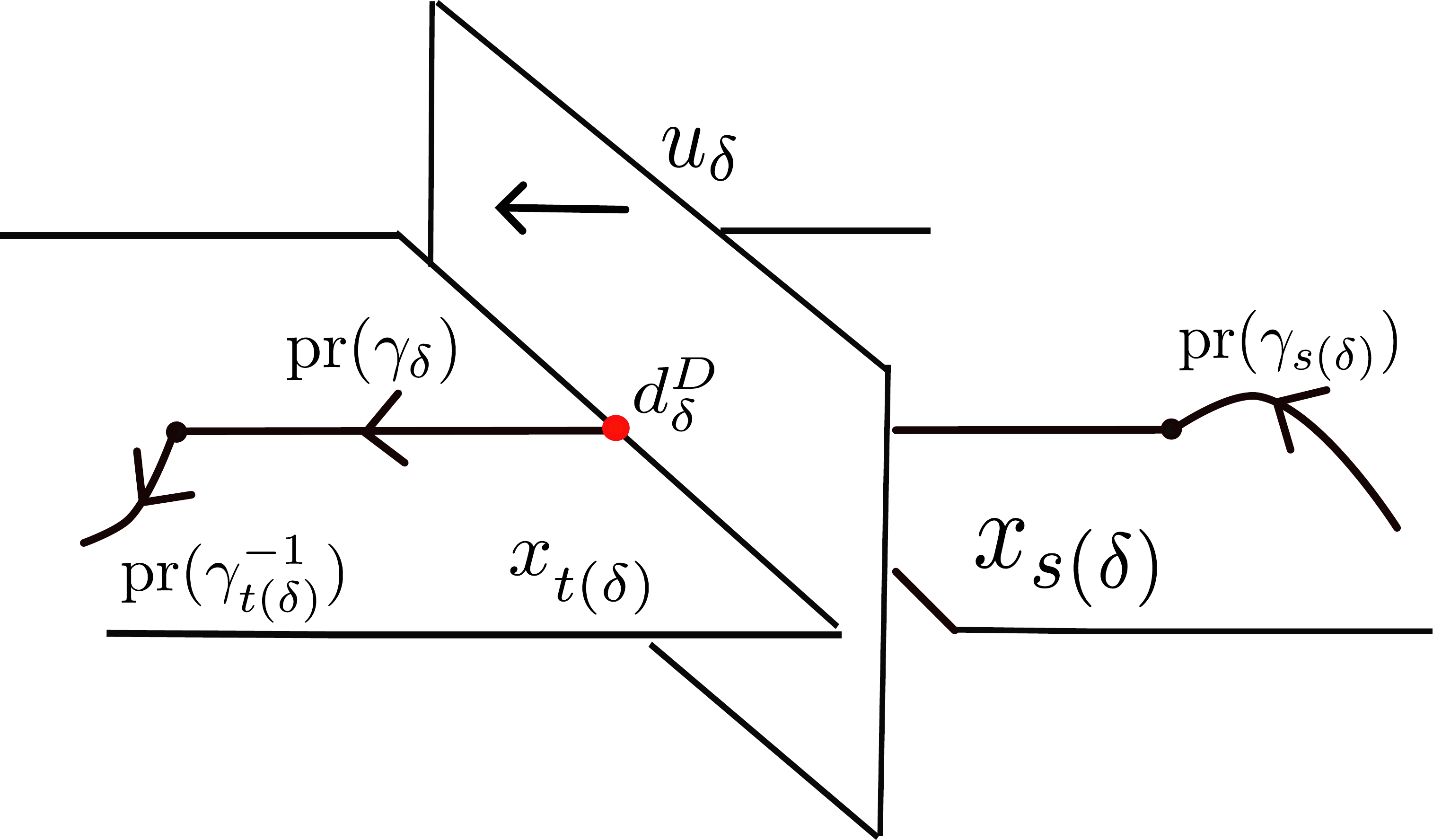}}
	\caption{The path $\gamma_\delta$}
	\label{path_alpha}
 \end{figure}

 We remark that $L_\delta\cap {\rm Ldeck}(D)=\{d_{s({\delta})1},\ldots,d_{s({\delta})m_{s(\delta)}},d_\delta,d_{t(\delta)1},\ldots,d_{t(\delta)m_{t(\delta)}}\}$. If $\varepsilon(d_{t(\delta)1})=+1$, it holds that
\begin{eqnarray*}
&&\frac{\partial_{f_{\theta}\circ\rho^2}}{\partial{x_j}}\left( x_n^{u_{s(\delta)1}^{\varepsilon\left(d_{s(\delta)1}\right)}\cdots u_{s(\delta)m_{s(\delta)}}^{\varepsilon\left(d_{s(\delta)m_{s(\delta)}}\right)}u_\delta u_{t(\delta)m_{t(\delta)}}^{-\varepsilon\left(d_{t(\delta)m_{t(\delta)}}\right)}\cdots u_{t(\delta)1}^{-\varepsilon\left(d_{t(\delta)1}\right)}}\right)\\
&&\frac{\partial_{f_{\theta}\circ\rho^2}}{\partial{x_j}}\left( x_n^{u_{s(\delta)1}^{\varepsilon\left(d_{s(\delta)1}\right)}\cdots u_{s(\delta)m_{s(\delta)}}^{\varepsilon\left(d_{s(\delta)m_{s(\delta)}}\right)}u_\delta u_{t(\delta)m_{t(\delta)}}^{-\varepsilon\left(d_{t(\delta)m_{t(\delta)}}\right)}\cdots u_{t(\delta)1}^{-1}}\right)\\
&=&1\cdot\left(-\theta\left(\rho\left(x_n^{u_{s(\delta)1}^{\varepsilon\left(d_{s(\delta)1}\right)}\cdots u_{s(\delta)m_{s(\delta)}}^{\varepsilon\left(d_{s(\delta)m_{s(\delta)}}\right)}u_\delta u_{t(\delta)m_{t(\delta)}}^{-\varepsilon\left(d_{t(\delta)m_{t(\delta)}}\right)}\cdots u_{t(\delta)1}^{-1}}\right),\rho(u_{t(\delta)1})\right)\right)\\
&&\times\frac{\partial_{f_{\theta}\circ\rho^2}}{\partial{x_j}}\left( x_n^{u_{s(\delta)1}^{\varepsilon\left(d_{s(\delta)1}\right)}\cdots u_{s(\delta)m_{s(\delta)}}^{\varepsilon\left(d_{s(\delta)m_{s(\delta)}}\right)}u_\delta u_{t(\delta)m_{t(\delta)}}^{-\varepsilon\left(d_{t(\delta)m_{t(\delta)}}\right)}\cdots u_{t(\delta)2}^{-\varepsilon\left(d_{t(\delta)2}\right)}}\right).
\end{eqnarray*}
By the definition of $c_{\rho}$, we remark that the following conditions are satisfied:
\begin{itemize}
\item The element $\rho(u_{t(\delta)1})$ equals the color of the upper sheet around $d_{t(\delta)1}^D$ by $c_\rho$
\item The element $\rho\left(x_n^{u_{s(\delta)1}^{\varepsilon\left(d_{s(\delta)1}\right)}\cdots u_{s(\delta)m_{s(\delta)}}^{\varepsilon\left(d_{s(\delta)m_{s(\delta)}}\right)}u_\delta u_{t(\delta)m_{t(\delta)}}^{-\varepsilon\left(d_{t(\delta)m_{t(\delta)}}\right)}\cdots u_{t(\delta)1}^{-1}}\right)$ equals the color of the source sheet around $d_{t(\delta)1}^D$ by $c_{\rho}$. 
\end{itemize}
Thus, it holds that 
\[
-\theta\left(\rho\left(x_n^{u_{s(\delta)1}^{\varepsilon\left(d_{s(\delta)1}\right)}\cdots u_{s(\delta)m_{s(\delta)}}^{\varepsilon\left(d_{s(\delta)m_{s(\delta)}}\right)}u_\delta u_{t(\delta)m_{t(\delta)}}^{-\varepsilon\left(d_{t(\delta)m_{t(\delta)}}\right)}\cdots u_{t(\delta)1}^{-1}}\right),\rho(u_{t(\delta)1})\right)=W_{\theta}(d_{t(\delta)1},c_{\rho}).\]
 Hence, we have the following equality:
\begin{eqnarray*}
&&\frac{\partial_{f_{\theta}\circ\rho^2}}{\partial{x_j}}\left( x_n^{u_{s(\delta)1}^{\varepsilon\left(d_{s(\delta)1}\right)}\cdots u_{s(\delta)m_{s(\delta)}}^{\varepsilon\left(d_{s(\delta)m_{s(\delta)}}\right)}u_\delta u_{t(\delta)m_{t(\delta)}}^{-\varepsilon\left(d_{t(\delta)m_{t(\delta)}}\right)}\cdots u_{t(\delta)1}^{-\varepsilon\left(d_{t(\delta)1}\right)}}\right)\\
&=&1\cdot W_{\theta}(d_{t(\delta)1},c_{\rho})\frac{\partial_{f_{\theta}\circ\rho^2}}{\partial{x_j}}\left( x_n^{u_{s(\delta)1}^{\varepsilon\left(d_{s(\delta)1}\right)}\cdots u_{s(\delta)m_{s(\delta)}}^{\varepsilon\left(d_{s(\delta)m_{s(\delta)}}\right)}u_\delta u_{t(\delta)m_{t(\delta)}}^{-\varepsilon\left(d_{t(\delta)m_{t(\delta)}}\right)}\cdots u_{t(\delta)2}^{-\varepsilon\left(d_{t(\delta)2}\right)}}\right).
\end{eqnarray*}

If $\varepsilon(d_{t(\delta)1})=-1$, we see that
\begin{eqnarray*}
&&\frac{\partial_{f_{\theta}\circ\rho^2}}{\partial{x_j}}\left( x_n^{u_{s(\delta)1}^{\varepsilon\left(d_{s(\delta)1}\right)}\cdots u_{s(\delta)m_{s(\delta)}}^{\varepsilon\left(d_{s(\delta)m_{s(\delta)}}\right)}u_\delta u_{t(\delta)m_{t(\delta)}}^{-\varepsilon\left(d_{t(\delta)m_{t(\delta)}}\right)}\cdots u_{t(\delta)1}^{-\varepsilon\left(d_{t(\delta)1}\right)}}\right)\\
&&\frac{\partial_{f_{\theta}\circ\rho^2}}{\partial{x_j}}\left( x_n^{u_{s(\delta)1}^{\varepsilon\left(d_{s(\delta)1}\right)}\cdots u_{s(\delta)m_{s(\delta)}}^{\varepsilon\left(d_{s(\delta)m_{s(\delta)}}\right)}u_\delta u_{t(\delta)m_{t(\delta)}}^{-\varepsilon\left(d_{t(\delta)m_{t(\delta)}}\right)}\cdots u_{t(\delta)1}}\right)\\
&=&1\cdot\left(\theta\left(\rho\left(x_n^{u_{s(\delta)1}^{\varepsilon\left(d_{s(\delta)1}\right)}\cdots u_{s(\delta)m_{s(\delta)}}^{\varepsilon\left(d_{s(\delta)m_{s(\delta)}}\right)}u_\delta u_{t(\delta)m_{t(\delta)}}^{-\varepsilon\left(d_{t(\delta)m_{t(\delta)}}\right)}\cdots u_{t(\delta)2}^{-\varepsilon\left(d_{t(\delta)2}\right)}}\right),\rho(u_{t(\delta)1})\right)\right)\\
&&\times\frac{\partial_{f_{\theta}\circ\rho^2}}{\partial{x_j}}\left( x_n^{u_{s(\delta)1}^{\varepsilon\left(d_{s(\delta)1}\right)}\cdots u_{s(\delta)m_{s(\delta)}}^{\varepsilon\left(d_{s(\delta)m_{s(\delta)}}\right)}u_\delta u_{t(\delta)m_{t(\delta)}}^{-\varepsilon\left(d_{t(\delta)m_{t(\delta)}}\right)}\cdots u_{t(\delta)2}^{-\varepsilon\left(d_{t(\delta)2}\right)}}\right).
\end{eqnarray*}
As in the case of $\varepsilon(d_{t(\delta)1})=1$, it holds that
\begin{eqnarray*}
&&\frac{\partial_{f_{\theta}\circ\rho^2}}{\partial{x_j}}\left( x_n^{u_{s(\delta)1}^{\varepsilon\left(d_{s(\delta)1}\right)}\cdots u_{s(\delta)m_{s(\delta)}}^{\varepsilon\left(d_{s(\delta)m_{s(\delta)}}\right)}u_\delta u_{t(\delta)m_{t(\delta)}}^{-\varepsilon\left(d_{t(\delta)m_{t(\delta)}}\right)}\cdots u_{t(\delta)1}^{-\varepsilon\left(d_{t(\delta)1}\right)}}\right)\\
&=&1\cdot W_{\theta}(d_{t(\delta)1},c_{\rho})\frac{\partial_{f_{\theta}\circ\rho^2}}{\partial{x_j}}\left( x_n^{u_{s(\delta)1}^{\varepsilon\left(d_{s(\delta)1}\right)}\cdots u_{s(\delta)m_{s(\delta)}}^{\varepsilon\left(d_{s(\delta)m_{s(\delta)}}\right)}u_\delta u_{t(\delta)m_{t(\delta)}}^{-\varepsilon\left(d_{t(\delta)m_{t(\delta)}}\right)}\cdots u_{t(\delta)2}^{-\varepsilon\left(d_{t(\delta)2}\right)}}\right).
\end{eqnarray*}
Repeating this procedure, we have
\begin{eqnarray*}
&&\frac{\partial_{f_{\theta}\circ\rho^2}}{\partial{x_j}}(x_n^{u_{i1}^{\varepsilon(d_{i1})}\cdots u_{im_i}^{\varepsilon(d_{im_i})}u_\delta u_{jm_j}^{-\varepsilon(d_{jm_j})}\cdots u_{j1}^{-\varepsilon(d_{j1})}})\\
&=&1\cdot\left(\sum_{d\in L_\delta\cap {\rm Ldeck}(D)}W_{\theta}(d,c_{\rho})\right)\frac{\partial_{f_{\theta}\circ\rho^2}}{\partial{x_j}}(x_n)\\
&=&\begin{cases}
1\cdot W_{\theta}(L_\delta,c_{\rho})\ \ (j=n)\\
0\quad\quad\quad\quad\quad\quad(j\neq n)
\end{cases}.
\end{eqnarray*}
Hence, it holds that 
\[
\frac{\partial_{f_{\theta}\circ\rho}}{\partial{x_j}}(r_\delta)=\begin{cases}
1\cdot W_{\theta}(L_\delta,c_{\rho})-1\cdot0_A\ (j=n)\\
0\hspace{35.5mm}(j\neq n).
\end{cases}
\]
 By the definition of $f$-twisted Alexander matrices and previous discussions, we have
\[
A(Q(F),\rho;f_{\theta},0)=\left(
\begin{array}{ccccc}
-1\cdot 0_A & 0 & \cdots & 0 & 1\cdot a_1 \\
 0 & -1\cdot 0_A &  &  & 1\cdot a_2\\
 &  & \ddots &  & \vdots\\
 \vdots& & & -1\cdot 0_A & 1\cdot a_{n-1}\\
 0 &   &\cdots   & 0  & U
\end{array}
\right),
\]
where $U=\left( 1\cdot W_{\theta}(L_\delta,c_{\rho})-1\cdot 0_A\right)$. We remark that $U$ is a column vector with $k$ rows, where $k$ is the number of double point curves. We can see that $A(Q(F),\rho;f_{\theta},0)$ and $U$ are related by a finite sequence of the transformations (M1) $\sim$ (M8), which implies that $E_0(A(Q(F),\rho;f_{\theta},0))$ is the ideal generated by $\{1\cdot W_{\theta}(L_\delta,c_{\rho})-1\cdot0_A\mid \delta:\textrm{a double point curve}\}$. Thus, it holds that $E_0(A(Q(F),\rho;f_{\theta},0))\subset(\{1\cdot W_{\theta}(\lambda,c_{\rho})-1\cdot 0_A\mid \lambda\in H_1(F)\})$.

Let $\lambda$ be an element of $H_1(F)$. We take an oriented immersed curve $L_{\lambda}$ on $F$ based at ${\rm pr}^{-1}(q)$ which is a representative element of $\lambda$ and fix it. We may assume that $L^D_\lambda$ intersects the double point curves transversely, and misses triple points and branched points. We set $l_\lambda:=|L_\lambda\cap {\rm Ldeck}(D)|$. Let $e_{1}$ be the first point at which $L_\lambda$ intersects lower decker curves ${\rm Ldeck}(D)$ after departing from ${\rm pr}^{-1}(q)$ and let $e_{2},\ldots,e_{l_\lambda}$ be the points at which $L_\lambda$ intersects ${\rm Ldeck}(D)$ in turn with the orientation of $L_\lambda$. For $e_{i}$, we denote the upper sheet near the point $e_{i}^D$ by $v_{i}$. Then, the element $x_n^{v_{1}^{\varepsilon(e_{1})}\cdots v_{l_\lambda}^{\varepsilon(e_{l_\lambda})}}$ is equal to $x_n$ in the knot quandle $Q(F)$. Hence, we see that the presentation 
\[
\left\langle x_1,\ldots x_n~
\begin{array}{|c}
r_1,\ldots,r_{n-1}\\
r_\delta \ (\delta:\textrm{a double point curve})\\
(x_n^{v_{1}^{\varepsilon(e_{1})}\cdots v_{l_\lambda}^{\varepsilon(e_{l_\lambda})}},x_n)
\end{array}
\right\rangle
\]
is also a presentation of $Q(F)$. As in the previous discussion, we have 
\begin{eqnarray*}
\frac{\partial_{f_{\theta}\circ\rho^2}}{\partial{x_j}}\left(x_n^{v_{1}^{\varepsilon(e_{1})}\cdots v_{l_\lambda}^{\varepsilon(e_{l_\lambda})}}\right)&=&1\cdot\left(\sum^{l_\lambda}_{j=1}W_{\theta}(e_{j},c_{\rho})\right)\frac{\partial_{f_{\theta}\circ\rho^2}}{\partial{x_j}}(x_n)\\
&=&1\cdot W_{\theta}(L_\lambda,c_{\rho})\frac{\partial_{f_{\theta}\circ\rho^2}}{\partial{x_j}}(x_n)\\
&=&1\cdot W_{\theta}(\lambda,c_{\rho})\frac{\partial_{f_{\theta}\circ\rho^2}}{\partial{x_j}}(x_n).
\end{eqnarray*}
Thus, it holds that 
\[
\frac{\partial_{f_{\theta}\circ\rho^2}}{\partial{x_j}}\left(\left(x_n^{v_{i1}^{\varepsilon(e_{i1})}\cdots v_{il_i}^{\varepsilon(e_{il_i})}},x_n\right)\right)=\begin{cases}
1\cdot W_{\theta}(\lambda,c_{\rho})-1\cdot0_A\ (j=n)\\
0\hspace{36mm}(j\neq n).
\end{cases}
\]
This implies that $1\cdot W_{\theta}(\lambda,c_{\rho})-1\cdot 0_A$ is an element of the ideal $E_0(A(Q(F,\rho;f_{\theta},0)))$. Hence, the ideal $(\{1\cdot W_{\theta}(\lambda,c_{\rho})-1\cdot 0_A\mid \lambda\in H_1(F)\})$ is contained in $E_{0}(A(Q(F),\rho;f_{\theta},0))$. This implies the assertion.
\end{proof}
In the case of 2-knots, we have the following Corollary.
\begin{cor}
\label{cor:elementary_ideal_surface_knot}
Let $F$ be a 2-knot, $X$ a quandle, $A$ an abelian group and $\theta:X^2\to A$ a quandle 2-cocycle. For any quandle homomorphism $\rho:Q(F)\to X$, we have $E_0(A(Q(F),\rho;f_{\theta},0))=(0)$.
\end{cor}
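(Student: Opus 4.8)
The plan is to obtain the corollary as an immediate specialization of Theorem \ref{theo:main_1}, using that a $2$-knot is homeomorphic to the $2$-sphere and hence has vanishing first homology.

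First I would invoke Theorem \ref{theo:main_1}: for any quandle homomorphism $\rho:Q(F)\to X$ we have
\[
E_0(A(Q(F),\rho;f_{\theta},0))=(\{1\cdot W_{\theta}(\lambda,c_{\rho})-1\cdot 0_A\mid \lambda\in H_1(F;\Z)\}),
\]
where $c_{\rho}$ is the $X$-coloring of $D$ corresponding to $\rho$. Since $F$ is a $2$-knot, $F$ is homeomorphic to $S^2$, so $H_1(F;\Z)=0$ and the generating set on the right-hand side reduces to the single index $\lambda=0$.

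Next I would check that the corresponding generator vanishes, i.e.\ $W_{\theta}(0,c_{\rho})=0_A$. One can see this by choosing a representative $L_0$ of the trivial homology class that avoids all lower decker curves entirely (for instance a small null-homotopic loop contained in the interior of one sheet), so that the defining sum $\sum_{l}W_{\theta}(d_l,c_{\rho})$ is the empty sum and equals $0_A$; alternatively, the additivity $W_{\theta}(\lambda_1+\lambda_2,c)=W_{\theta}(\lambda_1,c)+W_{\theta}(\lambda_2,c)$ recorded in Remark \ref{remark:ideal} gives $W_{\theta}(0,c_{\rho})=W_{\theta}(0,c_{\rho})+W_{\theta}(0,c_{\rho})$ in $A$, forcing $W_{\theta}(0,c_{\rho})=0_A$. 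Hence the unique generator is $1\cdot 0_A-1\cdot 0_A=0$ in $\Z[A]$, and therefore $E_0(A(Q(F),\rho;f_{\theta},0))=(0)$.

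There is essentially no obstacle here; the only point that requires a moment's care is the verification $W_{\theta}(0,c_{\rho})=0_A$, which either of the two arguments above settles at once.
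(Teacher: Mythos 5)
Your proposal is correct and follows the same route as the paper: apply Theorem \ref{theo:main_1}, note that $H_1(F;\Z)$ is trivial for a $2$-knot, and observe that $W_{\theta}(0,c_{\rho})=0_A$ (the paper cites this as immediate from the definition of the Carter--Saito--Satoh invariant, which matches your empty-sum argument). Your extra verification via additivity is a harmless alternative justification of the same point.
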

\begin{proof}
By the definition of the Carter-Saito-Satoh's invariant, if $\lambda$ is the identity element of $H_1(F)$, we have $W_{\theta}(\lambda,c)=0_A$ for any $X$-coloring $c:S(D)\to X$ and quandle 2-cocycle $\theta:X^2\to A$. Thus, if $F$ is a 2-knot, it holds that
\[
E_0(A(Q(F),\rho;f_{\theta},0))=(\{1\cdot W_{\theta}(\lambda,c_{\rho})-1\cdot 0_A\mid \lambda\in H_1(F;\Z)\})=(0),
\]
where the first equality follows from Theorem \ref{theo:main_1}.
\end{proof}

\section{$f$-twisted Alexander matrices for connected quandles}
\label{sect:Alexander_matrix_connected_qdle}
In this section, we will discuss the $f$-twisted Alexander matrix of connected quandles with the Alexander pair obtained from a quandle $2$-cocycle. At first, we recall the definition of (co)homology groups of quandles \cite{Carter2003quandle}. Let $X$ be a quandle. For each positive integer $n$, let us denote the free abelian group whose basis is $X^n$ by $C^R_n(X)$. For $n\leq 0$, we assume $C^R_n(X)=0$. For each element $(x_1,\ldots,x_n)\in X^n$, we define an element $\partial(x_1,\ldots,x_n)$ of $C^R_{n-1}(X)$ by 
\begin{eqnarray*}
\partial(x_1,\ldots,x_n)&=&\sum^n_{i=2}(-1)^i(x_1,\ldots,x_{i-1},x_{i+1},\ldots,x_n)\\
&&-\sum^n_{i=2}(-1)^i(x_1^{x_i},\ldots,x_{i-1}^{x_i},x_{i+1},\ldots,x_n).
\end{eqnarray*}
Using this, for $n\ge 2$, we obtain a homomorphism $\partial_n:C^R_n(X)\to C^R_{n-1}(X)$. For $n\leq 1$, we define $\partial_n:C^R_n(X)\to C^R_{n-1}(X)$ by the zero map. Then, $(C^R_n(X),\partial_n)$ is a chain complex. 

Let $C^D_n(X)$ be the subgroup of $C^R_n(X)$ generated by the elements of
\[
\{ (x_1,\ldots,x_n)\in X^n\mid x_i=x_{i+1}\ \textrm{for some }i\}.
\]
We can verify that $(C^D_n(X),\partial_n)$ is a subcomplex of $(C^R_n(X),\partial_n)$. Thus, we obtain the chain complex $(C^Q_n(X)=C^R_n(X)/C^D_n(X),\partial_n)$. The homology group of the chain complex $(C^Q_n(X),\partial_n)$ is called the {\it quandle homology group}~\cite{Carter2003quandle}, which is denoted by $H^Q_n(X)$. 

Let $A$ be an abelian group. %We assume that the operation of $A$ is written by multiplicatively. 
For $n\in\Z$, let us define $C^n_Q(X;A)$ by the set of all group homomorphisms from $C^Q_n(X)$ to $A$ and $\delta_n:C^n_Q(X;A)\to C^{n+1}_Q(X;A)$ by $\delta_n(f)=f\circ\partial_{n+1}$. Then, we obtain a cochain complex $(C^n_Q(X),\delta_n)$, which is called the {\it quandle cochain complex}~\cite{Carter2003quandle}.

Let $\theta:X^2\to A$ be a quandle $2$-cocycle. We denote the linear extension of $\theta$ by the same symbol $\theta:\Z[X^2]\to A$. We see that the linear extension $\theta$ is a $2$-cocycle of the quandle cochain complex. Thus, given a quandle $2$-cocycle $\theta:X^2\to A$, we obtain the group homomorphism from $H^Q_2(X)$ to $A$. We also denote this group homomorphism by $\theta:H^Q_2(X)\to A$.

A quandle $X$ is {\it connected} if for every $x,y\in X$, it holds that $x^{z_1^{\varepsilon_1}\cdots z_n^{\varepsilon_n}}=y$ for some $z_1,\ldots,z_n\in X$ and $\varepsilon_1,\ldots,\varepsilon_n\in\{\pm 1\}$.  
\begin{theo}
\label{theo:main_2}
Let $Q=\langle x_1,\ldots,x_n\mid r_1,\ldots,r_m\rangle$ be a connected quandle, $X$ a quandle, $A$ an abelian group and $\theta:X^2\to A$ a quandle 2-cocycle. For any quandle homomorphism $\rho:Q\to X$, we have 
\[
E_0(A(Q,\rho;f_{\theta},0))=(\{1\cdot a-1\cdot 0_A\mid a\in{\rm Im}(\theta\circ\rho_{\ast})\}),
\]
 where $\rho_{\ast}:H^Q_2(Q)\to H^Q_2(X)$ is the group homomorphism induced by $\rho$.
\end{theo}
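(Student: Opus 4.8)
The plan is to compute the $f_\theta$-twisted Alexander matrix $A(Q,\rho;f_\theta,0)$ directly from the presentation $\langle x_1,\ldots,x_n\mid r_1,\ldots,r_m\rangle$, then use connectedness to put it in a normal form analogous to the one obtained in the proof of Theorem \ref{theo:main_1}. Writing each relator as $r_k=(r_{k,1},r_{k,2})$ with $r_{k,1},r_{k,2}\in FQ(S)$, every element of $FQ(S)$ has the form $x_{p}^{w}$ for a word $w$ in the generators and their inverses, so Proposition \ref{prop:calculation} applies: there is an element $b_{k,1}\in A$ (resp.\ $b_{k,2}\in A$) with $\frac{\partial_{f_\theta\circ\rho^2}}{\partial x_j}(r_{k,\ell})=1\cdot b_{k,\ell}\,\frac{\partial_{f_\theta\circ\rho^2}}{\partial x_j}(x_{p(k,\ell)})$. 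Since the derivative of a generator is $1$ or $0$, each row of the matrix has at most two nonzero entries, each of the form $\pm 1\cdot b$ for some $b\in A$ (or a difference $1\cdot b_{k,1}-1\cdot b_{k,2}$ when $p(k,1)=p(k,2)$). The key point, exactly as in the surface-knot case, is that when $p(k,1)=p(k,2)=:i$ the entry $1\cdot b_{k,1}-1\cdot b_{k,2}$ equals $\theta(\zeta_k)$ applied to a certain $2$-cycle, indeed $1\cdot\theta\bigl(\rho_*[\zeta_k]\bigr)-1\cdot 0_A$, where $[\zeta_k]\in H_2^Q(Q)$ is represented by the difference of the two "column words" read off the relator $r_k$ (this $2$-cycle is a cycle precisely because $r_{k,1}$ and $r_{k,2}$ represent the same element of $Q$, i.e.\ the cycle condition is the closed-up-loop condition).

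First I would reduce to the case where the two sides of each relator begin with the \emph{same} generator. Using Tietze moves (T2) one adjoins, for each $k$, a new generator and replaces $r_k=(r_{k,1},r_{k,2})$ by a relator of the form $(x_\ast^{w_k},x_\ast)$ for a single fixed generator $x_\ast$: concretely, pick a base generator $x_\ast$, use connectedness of $Q$ to write each $x_i$ as $x_\ast^{u_i}$ in $Q$ for a word $u_i$, add the relators $(x_\ast^{u_i},x_i)$ for $i\neq\ast$, and then rewrite each original relator as a loop based at $x_\ast$. This is the exact analogue of the passage to the presentation $\langle x_1,\ldots,x_n\mid r_1,\ldots,r_{n-1},r_\delta\rangle$ in the proof of Theorem \ref{theo:main_1}. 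After this reduction the matrix has the block shape displayed there: an $(n-1)\times(n-1)$ diagonal block with $-1\cdot 0_A$ on the diagonal, a last column, and a column vector $U$ whose entries are $1\cdot W_k-1\cdot 0_A$ with $W_k\in A$ the value of $\theta\circ\rho_*$ on the $2$-cycle associated to the $k$-th loop relator. The transformations (M1)--(M8) then clear everything except $U$, so $E_0(A(Q,\rho;f_\theta,0))$ is the ideal generated by $\{1\cdot W_k-1\cdot 0_A\}$.

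Next I would identify the set $\{W_k\}$ with a generating set of $\operatorname{Im}(\theta\circ\rho_*)$. For the inclusion "$\subseteq$": each $W_k$ is $\theta(\rho_*[\zeta_k])$ for an explicit $2$-cycle $[\zeta_k]\in H_2^Q(Q)$ built from the word appearing in $r_k$, using the cocycle identity of Example \ref{exam:Alexander_pair_2} to see that the alternating sum of $\theta$-terms produced by Proposition \ref{prop:calculation} is exactly $\theta$ evaluated on $\partial$ of a $3$-chain plus a genuine $2$-cycle — so $W_k\in\operatorname{Im}(\theta\circ\rho_*)$. For "$\supseteq$": given any class $\xi\in H_2^Q(Q)$, represent it by a cycle $\sum_l\epsilon_l(x_{a_l},x_{b_l})$ in $C_2^R(Q)$; realize this as a based loop $L_\xi$ in $Q$ (a word whose successive "super/sub-script" pairs spell out the $(x_{a_l},x_{b_l})$ with signs $\epsilon_l$), which is closed in $Q$ precisely because $\sum_l\epsilon_l(x_{a_l},x_{b_l})$ is a cycle; then adjoin the relator $(x_\ast^{L_\xi},x_\ast)$ by a Tietze move, and by Proposition \ref{prop:calculation} its row contributes the entry $1\cdot\theta(\rho_*\xi)-1\cdot 0_A$ to the matrix, hence to $E_0$. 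This is the same device used at the end of the proof of Theorem \ref{theo:main_1}, where loops representing classes in $H_1(F)$ were adjoined.

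The main obstacle I expect is the bookkeeping in the "$\supseteq$" direction: one must check carefully that an arbitrary $2$-cycle of $C_\ast^Q(Q)$ (not just of $C_\ast^R(Q)$, so modulo degenerate chains) can be represented by a closed based word in the free quandle whose associated $\theta$-weight, computed via Proposition \ref{prop:calculation}, is literally $\theta(\rho_*\xi)$ — the degenerate summands $(x,x)$ contribute $\theta(c,c)=0_A$ so they are harmless, but verifying that the boundary relations $\partial_3$ translate into sequences of Tietze/matrix moves (equivalently, that well-definedness on homology matches the invariance of $E_0$ under (M1)--(M8)) is the technical heart. A secondary subtlety is making the reduction to loop-relators uniform when some relator already has $r_{k,1},r_{k,2}$ starting from different generators; there one first applies (T2) to split it, exactly as $r_\delta$ was handled. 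Once these are in place, combining the two inclusions with the normal-form computation yields $E_0(A(Q,\rho;f_\theta,0))=(\{1\cdot a-1\cdot 0_A\mid a\in\operatorname{Im}(\theta\circ\rho_*)\})$, using Remark \ref{remark:ideal}'s observation that such differences behave multiplicatively so that a generating set of the subgroup $\operatorname{Im}(\theta\circ\rho_*)$ suffices.
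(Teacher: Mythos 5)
Your matrix computation is exactly the paper's: use connectedness and Tietze moves to replace the relators by loop-type relators based at a fixed generator, apply Proposition \ref{prop:calculation} to get the block form with $-1\cdot 0_A$ on a diagonal and a last column of weights, and reduce by (M1)--(M8) so that $E_0$ is generated by the elements $1\cdot W_k-1\cdot 0_A$ attached to the loop relators. The gap is in the step you yourself flag as ``the technical heart'': the identification of the set of loop weights with $\operatorname{Im}(\theta\circ\rho_{\ast})$. For the inclusion $E_0\subseteq(\cdots)$ you only need the elementary observation that the $2$-chain $\sum_i\varepsilon_i(p_{i-1}^{q_i^{(\varepsilon_i-1)/2}},q_i)$ read off a closed word has telescoping boundary $\partial(p,q)=(p)-(p^q)$, hence is a $2$-cycle when the word closes up; your appeal to the cocycle identity and to ``$\theta$ evaluated on $\partial$ of a $3$-chain'' is both unnecessary and misstated. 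The genuinely nontrivial direction is $(\cdots)\subseteq E_0$: you must show that \emph{every} class $g\in H^Q_2(Q)$ is realized by a closed word $x_n^{y_1^{\varepsilon_1}\cdots y_l^{\varepsilon_l}}=x_n$ whose weight is literally $\theta(\rho_{\ast}g)$, so that the corresponding relator can be adjoined. Your proposal asserts that a representative cycle $\sum_l\epsilon_l(x_{a_l},x_{b_l})$ ``spells out'' such a based word, but an arbitrary $2$-cycle does not directly give a single based closed word: one must first interpret it as a $1$-cycle in the graph with vertices $Q$ and edges $(p\xrightarrow{q}p^q)$, decompose it into circuits, connect them to the base vertex (checking that the connector paths contribute cancelling pairs at the chain level, so the weight is unchanged), and arrange the edge labels to be liftable to $FQ(S)$. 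None of this is carried out, and your suggestion that one must translate the $\partial_3$-relations into Tietze/matrix moves is a red herring: no well-definedness on homology of the loop construction is needed, only the surjectivity statement above.

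The paper closes exactly this gap by citing Eisermann's $2$-complex $\tilde{\Gamma}$: Theorem 9.9 of \cite{Eisermann2014quandle} gives an isomorphism $\varphi:H_1(\tilde{\Gamma})\to H^Q_2(Q)$ induced by $(p\xrightarrow{q}p^q)\mapsto(p,q)$, so every $g$ is represented by a based loop in $\Gamma$, and the $2$-cell relation $p^{qr}=p^{rq^r}$ lets one assume the edge labels lie in the generating set $S$; the weight of the resulting relator is then $\theta\circ\rho_{\ast}(g)$ by Proposition \ref{prop:calculation}, and Remark \ref{remark:ideal} handles sums as you indicate. So either quote this result or supply the graph-theoretic realization argument sketched above; as written, the reverse inclusion is not proved.
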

\begin{proof}
Let $\langle x_1,\ldots,x_n\mid r_1,\ldots, r_m\rangle$ be a presentation of $Q$. Since $Q$ is connected, for each $1\leq i\leq n-1$, there exist $y_{i1},\ldots,y_{il_i}\in Q$ and $\varepsilon_{i1},\ldots,\varepsilon_{il_i}\in \{\pm 1\}$ such that $x_i^{y_{i1}^{\varepsilon_{i1}}\cdots y_{il_i}^{\varepsilon_{il_i}}}=x_n$. Thus, using Tietze's moves, we may assume that 
\[
r_i=\begin{cases}
(x_i^{y_{i1}^{\varepsilon_{i1}}\cdots y_{il_i}^{\varepsilon_{il_i}}},x_n)\ (1\leq i\leq n-1)\\
(x_n^{y_{i1}^{\varepsilon_{i1}}\cdots y_{il_i}^{\varepsilon_{il_i}}},x_n)\ (n\leq i\leq m).
\end{cases}
\]
 We fix this presentation.

%Futhermore, replacing the relator $(x_i^{y_{i1}^{\varepsilon_{i1}}\cdots y_{im_i}^{\varepsilon_{im_i}}},x_n)$ to $(x_i^{x_i^{-\sum\varepsilon_{ik}}y_{i1}^{\varepsilon_{i1}}\cdots y_{im_i}^{\varepsilon_{im_i}}},x_n)$, we assume that $\sum^{l_i}_{k=1}\varepsilon_{ik}=0$ for $n\leq i\leq m$. This implies that $y_{i1}^{\varepsilon_{i1}}\cdots y_{im_i}^{\varepsilon_{im_i}}$ is contained in $[{\rm As}(X),{\rm As}(X)]$. 

%===============
\begin{comment}
\begin{eqnarray*}
\frac{\partial_{f_{\theta}}}{\partial x_j}(r_i)&=&\frac{\partial_{f_{\theta}}}{\partial x_j}(x_i\ast^{\varepsilon_{i1}}a_{i1}\cdots x_{i-1m_{i-1}}\ast^{\varepsilon_{il_i}}a_{il_i})-\frac{\partial_{f_{\theta}}}{\partial x_j}(x_n)\\
&=&\theta(x_i\ast^{\varepsilon_{i1}}a_{i1}\cdots x_{i-1m_{i-1}}\ast^{\frac{-1+\varepsilon_{il_i}}{2}}a_{il_i},a_{il_i})^{\varepsilon_{il_i}}\frac{\partial_{f_{\theta}}}{\partial x_j}(x_i\ast^{\varepsilon_{i1}}a_{i1}\cdots x_{i-1m_{i-1}})\\
&&-\frac{\partial_{f_{\theta}}}{\partial x_j}(x_n)
\end{eqnarray*}
\end{comment}
%===============
For any $1\leq j\leq m$, by Proposition \ref{prop:calculation}, it holds that 
\[
\frac{\partial_{f_{\theta}\circ\rho^2}}{\partial x_j}(x_i^{y_{i1}^{\varepsilon_{i1}}\cdots y_{il_i}^{\varepsilon_{il_i}}})=1\cdot\left(\sum^{l_i}_{k=1}\varepsilon_{ik}\theta(\rho(x_i^{y_{i1}^{\varepsilon_{i1}}\cdots y_{ik-1}^{\varepsilon_{ik-1}}y_{ik}^{\frac{\varepsilon_{ik}-1}{2}}}),\rho(y_{ik}))\right) \frac{\partial_{f_{\theta}\circ\rho^2}}{\partial x_j}(x_i)
\]
 if $1\leq i\leq n-1$, and 
\[
\frac{\partial_{f_{\theta}\circ\rho^2}}{\partial x_j}(x_n^{y_{i1}^{\varepsilon_{i1}}\cdots y_{il_i}^{\varepsilon_{il_i}}})=1\cdot\left(\sum^{l_i}_{k=1}\varepsilon_{ik}\theta(\rho(x_n^{y_{i1}^{\varepsilon_{i1}}\cdots y_{ik-1}^{\varepsilon_{ik-1}}y_{ik}^{\frac{\varepsilon_{ik}-1}{2}}}),\rho(y_{ik}))\right) \frac{\partial_{f_{\theta}\circ\rho^2}}{\partial x_j}(x_n)
\] 
if $n\leq i\leq m$. We put 
\[
a_i:=\begin{cases}
\sum^{l_i}_{k=1}\varepsilon_{ik}\theta(\rho(x_i^{y_{i1}^{\varepsilon_{i1}}\cdots y_{ik-1}^{\varepsilon_{ik-1}}y_{ik}^{\frac{\varepsilon_{ik}-1}{2}}}),\rho(y_{ik}))\ (1\leq i\leq n-1)\\
\sum^{l_i}_{k=1}\varepsilon_{ik}\theta(\rho(x_n^{y_{i1}^{\varepsilon_{i1}}\cdots y_{ik-1}^{\varepsilon_{ik-1}}y_{ik}^{\frac{\varepsilon_{ik}-1}{2}}}),\rho(y_{ik}))\ (n\leq i\leq m).
\end{cases}
\]
Then, we have 
\[
\frac{\partial_{f_{\theta}\circ\rho^2}}{\partial x_j}(r_i)=\begin{cases}
1\cdot a_i\frac{\partial_{f_{\theta}\circ\rho^2}}{\partial x_j}(x_i)-1\cdot 0_A\frac{\partial_{f_{\theta}\circ\rho^2}}{\partial x_j}(x_n)\ (1\leq i\leq n-1)\\
(1\cdot a_i-1\cdot 0_A)\frac{\partial_{f_{\theta}\circ\rho^2}}{\partial x_j}(x_n)\hspace{14mm} (n\leq i\leq m).
\end{cases}
\] %We note that $1\cdot z$ is a unit of $\Z[A]$ for any $z\in A$.

By the definition of the $f$-twisted Alexander matrix, it holds that 
\[
A(Q,\rho;f_{\theta},0)=\left(
\begin{array}{ccccc}
1\cdot a_1 & 0 & \cdots & 0 & -1\cdot 0_A \\
 0 & 1\cdot a_2 &  &  & -1\cdot 0_A\\
 &  & \ddots &  & \vdots\\
 \vdots& & & 1\cdot a_{n-1} & -1\cdot 0_A\\
  &   &   &   & 1\cdot a_n-1\cdot 0_A \\
  &   &   &   & \vdots \\
 0 &   & \cdots  & 0  & 1\cdot a_m-1\cdot 0_A \\
\end{array}
\right).
\]
As in the proof of Theorem \ref{theo:main_1}, we see that $A(Q,\rho;f_{\theta},0)$ and the matrix 
$\left(\begin{array}{c}
  1\cdot a_n-1\cdot 0_A  \\
  \vdots  \\
  1\cdot a_m-1\cdot 0_A
\end{array}\right)$ are related by a finite sequence of the transformations (M1) $\sim$ (M8). Hence, the 0-th elementary ideal $E_0(A(Q,\rho;f_{\theta},0))$ is the ideal generated by $1\cdot a_n-1\cdot 0_A,\ldots,1\cdot a_m-1\cdot 0_A$. 
%Thus, ${\rm Coker}(A(Q,\rho;f_{\theta},0))$ is isomorphic to the quotient module of $\Z[A]$ by the ideal generated by $1\cdot a_n-1\cdot 0_A,\ldots,1\cdot a_m-1\cdot 0_A$, which coincides with the $0$-th elementary ideal of $A(Q,\rho;f_{\theta},0)$. Hence, to prove Theorem \ref{theo:main_2}, we will show that $E_0(A(Q,\rho;f_{\theta},0))$ is equal to the ideal generated by $\{1\cdot a-1\cdot 0_A\mid a\in{\rm Im}(\theta\circ\rho_{\ast})\}$. At first, we will prove $E_0(A(Q,\rho;f_{\theta},0))\subset(\{1\cdot a-1\cdot 0_A\mid a\in{\rm Im}(\theta\circ\rho_{\ast})\})$.

 Next, we will show that the elements $a_n,\ldots,a_m$ are elements in ${\rm Im}(\theta\circ\rho_\ast)$. Let us consider the $2$-dimensional complex $\tilde{\Gamma}$ which is discussed in the section $8.3$ in \cite{Eisermann2014quandle} (see also \cite{Fenn1995trunks}). The complex $\tilde{\Gamma}$ is defined as follows: Let $\Gamma$ be the oriented graph with vertices $q\in Q$ and edges $(p\xrightarrow{q} r)$ for each triple $p,q,r\in Q$ with $p^q=r$. We regard $\Gamma$ as $1$-skeleton and glue in a 2-cell for each relation of $p^p=p,p^{qq^{-1}}=p^{q^{-1}q}=p$ and $p^{qr}=p^{rq^r}$. For convenience, the inverse path of $(p^{q^{-1}}\xrightarrow{q}p)$ is denoted by $(p\xrightarrow{q^{-1}}p^{q^{-1}})$. 

 By Theorem $9.9$ in \cite{Eisermann2014quandle}, a map $\varphi:C_1(\tilde{\Gamma})\to C^Q_2(Q)$ defined by mapping each edge $(p\xrightarrow{q}p^q )$ to $(p,q)$ induced an isomorphism $\varphi:H_1(\tilde{\Gamma})\to H^Q_2(Q)$. We note that $\varphi((p\xrightarrow{q^{-1}} p^{q^{-1}}))=-(p^{q^{-1}},q)$ for any $p,q\in Q$. We take a path $(p_0\xrightarrow{q_1^{\varepsilon_1}}\cdots\xrightarrow{q_l^{\varepsilon_l}}p_l)$ of $\Gamma$ and regard the path as a path in $\tilde{\Gamma}$. Then, it holds that 
\[
\varphi\left(\sum^l_{i=1}(p_{i-1}\xrightarrow{q_i^{\varepsilon_i}}p_i)\right)=\sum^{l}_{i=1}\varepsilon_i(p_{i-1}^{q_{i}^{\frac{\varepsilon_i-1}{2}}},q_{i}),
\]
where $p_i=p_0^{q_1^{\varepsilon_1}\cdots q_i^{\varepsilon_{i}}}$. 
%We remark that if $p_l=p_0$, $\sum^{l}_{i=1}\varepsilon_i(p_{i-1}^{q_{i}^{\frac{\varepsilon_i-1}{2}}},q_{i})$ is a $2$-cycle of $C^Q_2(Q)$.

 Assume that $p_0,q_1,\ldots,q_l$ are elements of $S=\{ x_1,\ldots,x_n\}$. 
By Proposition \ref{prop:calculation}, we have 
\[
\frac{\partial_{f_{\theta}\circ\rho^2}}{\partial x_j}(p_0^{q_1^{\varepsilon_1}\cdots q_l^{\varepsilon_l}})=1\cdot\left(\sum^{l}_{i=1}\varepsilon_i\theta(\rho(p_{i-1}^{q_{i}^{\frac{\varepsilon_i-1}{2}}}),\rho(q_{i}))\right)\frac{\partial_{f_{\theta}\circ\rho^2}}{\partial x_j}(p_0)
\]
 for any $1\leq j\leq n$. If $p_l=p_0$, the 2-chain $\sum^{l}_{i=1}\varepsilon_i(p_{i-1}^{q_{i}^{\frac{\varepsilon_i-1}{2}}},q_{i})$ is a $2$-cycle. This implies that the element $\sum^{l}_{i=1}\varepsilon_i\theta(\rho(p_{i-1}^{q_{i}^{\frac{\varepsilon_i-1}{2}}}),\rho(q_{i}))$ is an element of ${\rm Im}(\theta\circ\rho_{\ast})$. 

%By $x_n^{y_{i1}^{\varepsilon_{i1}}\cdots y_{il_i}^{\varepsilon_{il_i}}}=x_n$ in $Q$ for $n\leq i\leq m$, the $2$-chain $\sum^{l_i}_{k=1}\varepsilon_i(x_{n}^{y_{i1}^{\varepsilon_{i1}}\cdots y_{ik-1}^{\varepsilon_{k-1}}y_{ik}^{\frac{\varepsilon_{k}-1}{2}}},y_{k})$ is a $2$-cycle. 
By $x_n^{y_{i1}^{\varepsilon_{i1}}\cdots y_{il_i}^{\varepsilon_{il_i}}}=x_n$ in $Q$ for $n\leq i\leq m$, the element $a_i$ which satisfies $\frac{\partial_{f_{\theta}\circ\rho}}{\partial x_j}(x_n^{y_{i1}^{\varepsilon_{i1}}\cdots y_{il_i}^{\varepsilon_{il_i}}})=1\cdot a_{i}\frac{\partial_{f_{\theta}\circ\rho}}{\partial x_j}(x_n)$ is an element of the image of $\theta\circ\rho_{\ast}$ for any $n\leq i\leq m$. Thus, we see that $E_{0}(A(Q,\rho;f_{\theta},0))$ is contained in $(\{1\cdot a-1\cdot 0_A\mid a\in{\rm Im}(\theta\circ\rho_{\ast})\})$.
%Since $E_{0}(A(Q,\rho;f_{\theta},0))$ is the ideal generated by $1\cdot a_n-1\cdot0_A,\ldots,1\cdot a_m-1\cdot0_A$, we see that $E_{0}(A(Q,\rho;f_{\theta},0))$ is contained in $(\{1\cdot a-1\cdot 0_A\mid a\in{\rm Im}(\theta\circ\rho_{\ast})\})$.

For any $g\in H^Q_2(Q)$, there is a loop $(x_n\xrightarrow{y_1^{\varepsilon_1}}\cdots\xrightarrow{y_l^{\varepsilon_l}}x_n)$ in $\Gamma$ such that 
\[
\varphi\left(\sum^{l}_{i=1}(x_n^{y_1^{\varepsilon_1}\cdots y_{i-1}^{\varepsilon_{i-1}}}\xrightarrow{y_i^{\varepsilon_i}}x_n^{y_1^{\varepsilon_1}\cdots y_{i}^{\varepsilon_{i}}})\right)=g.
\]
 Using relation $p^{qr}=p^{rq^r}$, we may assume that $y_1,\ldots,y_l$ are elements in $S$.
% and $h_1,\ldots,h_l\in\{ x_1,\ldots,x_n\}$. 
By the above calculation, it holds that 
\[
\frac{\partial_{f_{\theta}\circ\rho^2}}{\partial x_j}(x_n^{y_1^{\varepsilon_1}\cdots y_l^{\varepsilon_l}})=1\cdot \theta\circ\rho_{\ast}(g)\frac{\partial_{f_{\theta}\circ\rho^2}}{\partial x_j}(x_n)
\]
for any $1\leq j\leq n$. It is obvious that the presentation 
\[
\langle x_1,\ldots,x_n\mid r_1,\ldots,r_m,(x_n^{y_1^{\varepsilon_1}\cdots y_l^{\varepsilon_l}},x_n)\rangle
\]
 is also a presentation of $Q$. Hence, the matrix $A(Q,\rho;f_{\theta},0)$ with respect to the presentation 
$\langle x_1,\ldots,x_n\mid r_1,\ldots,r_m,(x_n^{y_1^{\varepsilon_1}\cdots y_l^{\varepsilon_l}},x_n)\rangle$
 is related to the matrix 
$\left(
\begin{array}{c}
  1\cdot a_n-1\cdot 0_A  \\
  \vdots  \\
  1\cdot a_m-1\cdot 0_A \\
1\cdot \theta\circ\rho_{\ast}(g)-1\cdot 0_A
\end{array}
\right)$, which implies that $1\cdot \theta\circ\rho_{\ast}(g)-1\cdot 0_A$ is an element of $E_0(A(Q,\rho;f_{\theta},0))$. Thus, the ideal generated by $\{1\cdot a-1\cdot0_A\mid a\in{\rm Im}(\theta\circ\rho_{\ast})\}$ is contained in $E_{0}(A(Q,\rho;f_{\theta},0))$. 
%Since ${\rm Coker}(A(Q,\rho;f_{\theta},0))$ does not depend on a choice of presentations, it holds that $1\cdot \theta\circ\rho_{\ast}(g)-1\cdot e$ is contained in the ideal generated by $1\cdot u_n-1\cdot e,\ldots,1\cdot u_m-1\cdot e$. We obtain the claim.
\end{proof}

It is known that the knot quandle of a surface knot is connected. Hence, by Corollary \ref{cor:elementary_ideal_surface_knot} and Theorem \ref{theo:main_2}, we have the following Corollary.
\begin{cor}
\label{cor:2-knot_homology}
For any $2$-knot $F$, $H^Q_2(Q(F))$ is trivial.
\end{cor}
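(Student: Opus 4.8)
The plan is to apply Theorem~\ref{theo:main_2} and Corollary~\ref{cor:elementary_ideal_surface_knot} to one and the same piece of data and to compare the two resulting formulas for the $0$-th elementary ideal. Fix a $2$-knot $F$. As noted just above, $Q(F)$ is a connected quandle, and it is finitely presented (by a Wirtinger presentation coming from a broken surface diagram of $F$), so both results apply. I would take $X=Q(F)$, $\rho=\mathrm{id}_{Q(F)}$, and $A=H^Q_2(Q(F))$, and then construct a quandle $2$-cocycle $\theta\colon Q(F)^2\to A$ whose induced homomorphism $\theta\colon H^Q_2(Q(F))\to A$ is the identity. Granting such a $\theta$, one has $\rho_\ast=\mathrm{id}$ on $H^Q_2(Q(F))$, hence $\theta\circ\rho_\ast=\mathrm{id}_A$ and ${\rm Im}(\theta\circ\rho_\ast)=A$, so Theorem~\ref{theo:main_2} gives $E_0(A(Q(F),\mathrm{id};f_\theta,0))=(\{1\cdot a-1\cdot 0_A\mid a\in A\})$ as an ideal of the group ring $\Z[A]$. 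On the other hand, Corollary~\ref{cor:elementary_ideal_surface_knot}, applied to the $2$-knot $F$ with the same $X$, $\rho$ and $\theta$, gives $E_0(A(Q(F),\mathrm{id};f_\theta,0))=(0)$. Comparing the two, the ideal of $\Z[A]$ generated by $\{1\cdot a-1\cdot 0_A\mid a\in A\}$ is zero, so $1\cdot a=1\cdot 0_A$ in $\Z[A]$ for every $a\in A$; since the group elements $\{1\cdot a\mid a\in A\}$ form a $\Z$-basis of $\Z[A]$, this forces $a=0_A$ for all $a$, i.e.\ $H^Q_2(Q(F))=0$.

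The only step requiring a genuine argument is the construction of a cocycle $\theta$ realizing $\mathrm{id}_{H^Q_2(Q(F))}$. Here I would use that the quandle chain groups $C^Q_n(Q(F))$ are free abelian; in degrees $1$ and $2$ they have bases $Q(F)$ and $\{(x,y)\mid x\neq y\}$. Writing $Z=\ker\partial_2$ and $B={\rm Im}\,\partial_2\subseteq C^Q_1(Q(F))$, the sequence $0\to Z\to C^Q_2(Q(F))\to B\to 0$ splits because $B$, a subgroup of a free abelian group, is free abelian; picking a retraction $r\colon C^Q_2(Q(F))\to Z$ and composing with the projection $Z\to Z/{\rm Im}\,\partial_3=H^Q_2(Q(F))=A$ gives a homomorphism $C^Q_2(Q(F))\to A$ that annihilates ${\rm Im}\,\partial_3$, hence is a $2$-cocycle of the quandle cochain complex, and restricts to the quotient map on $Z$, so it induces $\mathrm{id}$ on $H^Q_2$. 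Its restriction to the basis $\{(x,y)\mid x\neq y\}$, together with $\theta(x,x):=0_A$, is a quandle $2$-cocycle in the sense of Example~\ref{exam:Alexander_pair_2}: the identity $\theta(x,y)+\theta(x^y,z)=\theta(x,z)+\theta(x^z,y^z)$ is exactly the statement that $\theta$ kills $\partial_3(x,y,z)$. Alternatively, since $H^Q_1$ is free abelian, the universal coefficient theorem gives $H^2_Q(X;A)\cong{\rm Hom}(H^Q_2(X),A)$, so the class corresponding to $\mathrm{id}_{H^Q_2(Q(F))}$ is represented by a cocycle.

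I expect this cocycle construction to be the main obstacle, in the limited sense that it is the only point not obtained directly by quoting earlier results; the remainder is formal manipulation in a group ring. It is worth emphasizing why $X=Q(F)$ and $\rho=\mathrm{id}$ are the right choices: taking $A=H^Q_2(Q(F))$ itself and $\theta$ to realize the identity makes ${\rm Im}(\theta\circ\rho_\ast)$ all of $H^Q_2(Q(F))$, so that the vanishing of $E_0$ provided by Corollary~\ref{cor:elementary_ideal_surface_knot} constrains the entire group $H^Q_2(Q(F))$ rather than merely a quotient of it.
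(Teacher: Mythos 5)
Your argument is correct and is essentially the paper's own proof: it applies Theorem \ref{theo:main_2} and Corollary \ref{cor:elementary_ideal_surface_knot} with $X=Q(F)$, $\rho=\mathrm{id}$, $A=H^Q_2(Q(F))$ and a $2$-cocycle $\theta$ inducing the identity on $H^Q_2$, the only differences being that the paper phrases it as a contradiction with a single nontrivial class while you compare the two ideal formulas directly, and that you spell out the (standard, correct) existence of the cocycle realizing $\mathrm{id}_{H^Q_2}$, which the paper simply asserts.
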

\begin{proof}
Let $F$ be a 2-knot. Assume that $H^Q_2(Q(F))$ is non-trivial. We set $A=H^Q_2(Q(F))$ and take a non-trivial element $a$ in $A$. Let $\theta:Q(F)^2\to A$ be a quandle 2-cocycle which corresponds to the identity map $H^Q_2(Q(F))\to A=H^Q_2(Q(F))$. 

Let $\rho$ be the identity map ${\rm id}:Q(F)\to Q(F)$. Since the composite map $\theta\circ\rho_{\ast}:H^Q_2(Q(F))\to A=H^Q_2(Q(F))$ is the identity map and the knot quandle $Q(F)$ is a connected quandle, $1\cdot a-1\cdot 0_A$ is contained in $E_0(A(Q(F),\rho;f_{\theta},0))$. This implies that $E_0(A(Q(F),\rho;f_{\theta},0))$ is not equal to zero ideal. On the other hand, by Corollary \ref{cor:elementary_ideal_surface_knot}, $E_0(A(Q(F),\rho;f_{\theta},0))$ is the zero ideal. This is a contradiction. 
\end{proof}
\begin{remark}
{\rm
By Artin's spinning construction \cite{Artin1925zur}, we see that that for any classical knot $K$, there exists a 2-knot $F$ such that $\pi_1(\R^3\backslash K)$ is isomorphic to $\pi_1(\R^4\backslash F)$. In other words, it holds that 
\[
\{\pi_1(\R^3\backslash K)\mid K:\textrm{ a classical knot}\}\subset\{\pi_1(\R^4\backslash F)\mid F:\textrm{ a 2-knot}\}.
\]

 It is known that for any non-trivial classical knot $K$, the second quandle homology group $H^Q_2(Q(K))$ is the infinite cyclic group \cite{Eisermann2003unknot}. Hence, by Corollary \ref{cor:2-knot_homology}, we see that for any non-trivial knot $K$, the knot quandle $Q(K)$ can not be realized by the knot quandle of 2-knots, that is, it holds that
\[
\{Q( K)\mid K:\textrm{ a non-trivial classical knot}\}\cap\{Q(F)\mid F:\textrm{ a 2-knot}\}=\emptyset.
\] 
}
\end{remark}

\bibliographystyle{plain}
\bibliography{reference}
\end{document}